\renewcommand{\ALG@name}{\footnotesize Algorithm}
\tikzstyle arrowstyle=[scale=1.5]
\tikzstyle directed=[postaction={decorate,decoration={markings,
    mark=at position .625 with {\arrow[arrowstyle]{stealth}}}}]
\newtheorem{theorem}{Theorem}[section]
\newtheorem{proposition}[theorem]{Proposition} 
\newtheorem{lemma}[theorem]{Lemma}              
\theoremstyle{definition}
\newtheorem{definition}[theorem]{Definition}        
\theoremstyle{remark}
\newtheorem{example}[theorem]{Example}          
\numberwithin{equation}{section}
\newcommand\cx[1]{\multicolumn{1}{|c|}{\!\!#1\!\!}}
\newcommand{\bnv}{\textup{\texttt{burnt\_vertices}}}
\newcommand{\pp}{\textup{\texttt{parking\_place}}}
\newcommand{\spp}{\textup{\texttt{street\_parking}}}
\newcommand{\ff}{\textup{\texttt{first\_free}}}
\newcommand{\damp}{\texttt{dampened\_edges}}
\DeclareMathOperator{\cN}{\textup{\texttt{neighbours}}}
\newcommand{\tree}{\texttt{tree\_edges}}
\newcommand\op{\textup{\texttt{occupied\_positions}}}
\newcommand\bak{\mathbf{\tilde{a}}^{k}}
\newcommand\bat[1]{\mathbf{\widetilde{a}}^{#1}}
\newcommand\cG{\mathcal{G}}
\newcommand\cR{\mathcal{R}}
\newcommand\bG{\overline{\mathcal{G}}}
\newcommand\oA{\overline{A}}
\newcommand\oV{\overline{V}}
\newcommand\bGk{\overline{\mathcal{G}_k}}
\newcommand\cA{\mathcal{A}}
\DeclareMathOperator{\Ish}{Ish}
\DeclareMathOperator{\Shi}{Shi}
\newcommand\gammab[1]{\gamma\mkern-2mu\raisebox{-1.5pt}{${}_{#1}$}}
\newcommand\gammad[1]{\scriptstyle\gamma\mkern-2mu\raisebox{-1.25pt}{$\scriptscriptstyle{}_{#1}$}}
\newcommand\ba{\mathbf{a}}
\newcommand\ta{\tilde{a}}
\newcommand\bb{\mathbf{b}}
\newcommand\be{\mathbf{e}}
\newcommand\bw{\mathbf{w}}
\newcommand\bx{\mathbf{x}}
\newcommand\bt{\mathbf{t}}
\newcommand\Reals{\mathbb{R}}
\newcommand\N{\mathbb{N}}
\newcommand\Z{\mathbb{Z}}
\newcommand{\PF}{\mathsf{PF}}
\DeclareMathOperator{\dfs}{DFS}
\title{Partial Parking Functions}
\author[R.~Duarte]{Rui Duarte} \address{CIDMA and Department of Mathematics, University of Aveiro, 3810-193 Aveiro, Portugal}
\email{rduarte@ua.pt}
\author[A.~Guedes~de~Oliveira]{Ant\'onio Guedes de Oliveira}
\address{CMUP and Department of Mathematics, Faculty of Sciences, University of Porto, 4169-007 Porto, Portugal}
\email{agoliv@fc.up.pt}
\begin{document}

\begin{abstract}
We characterise the Pak-Stanley labels of the regions of a family of  hyperplane arrangements 
that interpolate between the Shi arrangement and the Ish arrangement. 
\end{abstract}
\maketitle

\section{Introduction}
\noindent
In this paper, we characterise the Pak-Stanley labels of the regions of the recently introduced family of the arrangements of hyperplanes ``between Shi and Ish'' (cf. \cite{DGO3}). 

In other words, for $n\in\N=\{1,2,\dotsc\}$
there is a labelling (due to Pak and Stanley \cite{Stan2}) of the regions of the $n$-dimensional Shi arrangement (that is, the connected components of the complement in $\Reals^n$ of the union of the hyperplanes of the arrangement) by the $n$-dimensional \emph{parking functions}, and the labelling in this case is a bijection. Remember that the parking functions can be characterised (see Definition~\ref{def31} below;
as usual, given $n\in\N\cup\{0\}$, we define $[n]:=[1,n]$ where $[m,n]:=\{i\in\Z\mid m\leq i\leq n\}$)  as
\begin{verse}
$\ba=(a_1,\dotsc,a_n)\in[n]^n$ such that there is a permutation $\sigma\in\mathfrak{S}_n$ with 
\begin{equation*}
a_{\sigma(i)}\leq i, \text{ for every } i\in[n]\,.
\end{equation*}
\end{verse}
By labelling under the same rules the regions of the $n$-dimensional \emph{Ish arrangement},  we obtain a new bijection between
these regions and the so-called \emph{Ish-parking functions} \cite{DGO2} which can be characterised (see Theorem~\ref{def.ipf} below) as
\begin{verse}
$\ba=(a_1,\dotsc,a_n)\in[n]^n$ such that there is a permutation $\sigma\in\mathfrak{S}_n$ with 
\begin{equation*}
\begin{cases}a_{\sigma(i)}\leq i, \text{ for every } i\in[a_1]\,;\\
\sigma(i+1)< \sigma(i), \text{ for every } i\in[a_1-1]\,.\end{cases}
\end{equation*}
\end{verse}
In this paper, we show that the sets of labels corresponding to the arrangements $\cA^k_n$ ($2\leq k\leq n$) that interpolate between the Shi and the Ish arrangements (which are $\cA^2_n$ and $\cA_n^n$, respectively) can be characterised (see Proposition~\ref{def2.ppf}) as
\begin{verse}
$\ba=(a_1,\dotsc,a_n)\in[n]^n$ such that there is a permutation $\sigma\in\mathfrak{S}_n$ with 
\begin{equation*}
\begin{cases}
a_{\sigma(i)}\leq i \text{ for every } i\in[a_1]\text{ and for every }
i\in[k,n]\text{ such that }\sigma(i)\geq k\,;\\
\sigma(i+1)< \sigma(i) \text{ for every } i\in[a_1-1]\text{ such that }\sigma(i)<k\,.\end{cases}
\end{equation*}
\end{verse}
We call these sets of labels \emph{partial parking functions} and note that they all have the same number of elements, viz. $(n+1)^{n-1}$, by \cite[Section~2 and Theorem~3.7]{DGO}.
Note that if $k=2$, $\ba$ satifies the first condition above and $i<a_1$ verifies $\sigma(i)<k$, then $a_1=a_{\sigma(i)}\leq i$, a contradiction.
\section{Preliminaries}
\noindent
Consider, for a natural number $n\geq3$, hyperplanes of $\Reals^n$ of the following three types.
Let, for $1\leq i<j\leq n$,
\begin{align*}
&C_{ij}=\big\{(x_1,\dotsc,x_n)\in\Reals^n\mid x_i=x_j\big\}\,,\\
&S_{ij}=\big\{(x_1,\dotsc,x_n)\in\Reals^n\mid x_i=x_j+1\big\}\,,\\
&I_{ij}=\big\{(x_1,\dotsc,x_n)\in\Reals^n\mid x_1=x_j+i\big\}\\
\intertext{and define, for $2\leq k<n$,}
&\cA^k_n:=\big\{C_{ij} \mid 1\leq i<j\leq n \big\}\\
&\hphantom{\cA^k_n:={}}\cup\big\{I_{ij} \mid 1\leq i<j\leq n \,\wedge\, i<k \big\}\\
&\hphantom{\cA^k_n:={}}\cup\big\{S_{ij} \mid k\leq i<j\leq n\big\}
\end{align*} 	
Note that $\cA^2_n=\Shi_n$, the $n$-dimensional Shi arrangement, and $\cA_n^n=\Ish_n$,
the $n$-dimensional Ish arrangement   introduced by Armstrong \cite{Arm}.

\subsection{The Pak-Stanley labelling}~\\
Let $\cA=\cA^k_n$ and define,
for every $(i,j)$ with $1\leq i<j\leq n$,
$$m_{ij}=\left\{\begin{tabular}{r}
$0$, \hfill if no hyperplane of equation $x_i-x_j=a$ belongs to $\cA$;\\
$\max\{a\mid \text{$\cA$ contains a hyperplane of equation $x_i-x_j=a$}\}$,
\hfill\text{ otherwise.}\end{tabular}\right.$$
Note that
\begin{itemize}
\item there are  no hyperplanes of equation $x_i -x_j =a$ with $a>0$ and $i>j$; 
\item if $a>0$ and the hyperplane of equation $x_i -x_j =a$ belongs to $\mathcal{A}$,
then it also belongs to $\mathcal{A}$ the hyperplane of equation $x_i -x_j =a-1$.
\end{itemize}
Similarly to what Pak and Stanley did for the regions of the Shi arrangement (cf. \cite{Stan2}),
we may represent a region $\cR$ of $\cA$ as follows.

Suppose that $\bx=(x_1,\dotsc,x_n)\in\cR$ and $x_{w_1}>\dotsb>x_{w_n}$ for a given 
$\bw=(w_1,\dotsc,w_n)\in\mathfrak{S}_n$.
Let $\mathcal{H}$ be the set of triples $(i,j,a_{ij})$ such that $i,j,a_{ij} \in \mathbb{N}$, $1\leq i < j \leq n$, $x_i > x_j$, $a_{ij}-1 < x_i - x_j < a_{ij}$ 
and the hyperplane of equation
$ x_i - x_j = a_{ij}$ belongs to $\mathcal{A}$, and let 
$$\mathcal{I}=\big\{(i,j)\in\N^2\mid 1\leq i<j\leq n \text{  and  } (i,j,a)\notin\mathcal{H}\text{ for every $a\in\N$}\big\}\,.$$
Then, 
\begin{equation}\label{eqr}
\cR=\left\{(x_1,\dotsc,x_n)\in\Reals^n\middle|
\begin{array}{l}x_{w_1}>x_{w_2}>\dotsb>x_{w_n},\\ 
a_{ij}-1<x_i-x_j<a_{ij}\,,\  \forall (i,j,a_{ij})\in \mathcal{H}\\
x_i-x_j>m_{ij}\,,\  \forall (i,j)\in \mathcal{I}
\end{array} \right\}\,.\end{equation}
We represent $\cR$ by $\bw$, decorated with one \emph{labelled arc} for each triple of $\mathcal{H}$, as follows. Given $(i,j,a_{ij})\in\mathcal{H}$, 
the arc  connects $i$ with $j$ and is labelled $a_{ij}$, with the following exceptions:
if $i\leq j<p\leq m$,  $(i,m,a_{im}),(j,p,a_{jp})\in\mathcal{H}$ and $a_{jp}=a_{im}$,
then we omit the arc connecting $j$ with $p$.
Note that, given $i\leq j<p\leq m$, forcibly
$$a_{im}\ >x_i-x_m\geq x_i-x_p\ \geq x_j-x_p$$
and so $a_{im}\geq a_{jp}$.
In the left-hand side of Figure~\ref{isharr} the regions of $\Ish_3$ are thus represented.

The Pak-Stanley labelling of these regions may be defined as follows.
As usual, let  $\be_i$ be the $i$.th element of the standard basis of $\Reals^n$, $\be_i=(0,\dotsc,0,1,0,\dotsc,0)$.

\begin{definition}[Pak-Stanley labelling \cite{Stan2}, ad.]
Let $\cR_0$ be the region defined by
$$x_n+1>x_1>x_2>\dotsb>x_n$$
(bounded by the hyperplanes of equation $x_j=x_{j+1}$ for $1\leq j<n$ and by the hyperplane of equation $x_1=x_n+1$).
Then label $\cR_0$ with $\ell(\cA^k_n,\cR_0):=(1,\dotsc,1)$, and,
 given two regions $\cR_1$ and $\cR_2$ separated by a unique hyperplane $H$ of $\cA^k_n$ such that $\cR_0$ and
$\cR_1$ are on the same side of $H$, label the regions $\cR_1$ and $\cR_2$ so that
$$\ell(\cA^k_n,\cR_2)=\ell(\cA^k_n,\cR_1)+\begin{cases}\be_i,&\text{if $H=C_{ij}$ for some $1\leq i<j\leq n$;}\\\be_j,&\text{if $H=S_{ij}$ or $H=I_{ij}$ for some $1\leq i<j\leq n$.}\end{cases}$$
\label{def.PSl}\end{definition}

\begin{figure}[t]
\noindent
\strut\hfill
\begin{tikzpicture}[scale=1.1]
\draw  [very thick]  (-0.866, -1.5) -- (1.732, 3.)  node [pos=1.,above,sloped]  {\scriptsize$x=z$};
\draw  [very thick]  (-1.732, 3.) -- (0.866, -1.5)  node [pos=1.05,above,sloped]  {\scriptsize$y=z$};
\draw  [very thick]  (-3., 0.) -- (2., 0.) node [pos=1.05,above,sloped]  {\scriptsize$x=y$};
\draw  [thick,dashed]  (-2.866, -1.5) -- (-0.268, 3.)  node [pos=1.1,above,sloped]  {\scriptsize$x=z+2$};
\draw  [thick,dashed]  (-1.866, -1.5) -- (0.732,  3.)  node [pos=1.1,above,sloped]  {\scriptsize$x=z+1$};
\draw  [thick,dashed]  (-3., 0.866) -- (2., 0.866) node [pos=1.1,above,sloped]  {\scriptsize$x=y+1$};
\node at (-1.,2.533){\begin{tikzpicture}[scale=0.2,baseline=-3.5pt]%
\draw (0,0) node[inner sep=.5mm,minimum size=2mm](x1){\small$1$};
\draw (1,0) node[inner sep=.5mm,minimum size=2mm] (x3){\small$3$};
\draw (2,0) node[inner sep=.5mm,minimum size=2mm] (x2){\small$2$};
\end{tikzpicture}};
\node at (0.,2.533){\begin{tikzpicture}[scale=0.2,baseline=-3.5pt]%
\draw (0,0) node[inner sep=.5mm,minimum size=2mm](x1){\small$1$};
\draw (1,0) node[inner sep=.5mm,minimum size=2mm] (x3){\small$3$};
\draw (2,0) node[inner sep=.5mm,minimum size=2mm] (x2){\small$2$};
\draw[thick,looseness=1.5]  (x1) to[out=90,in=90] (x3); 
\draw[thick,looseness=1.]  (x1) to[out=80,in=100] (x3); 
\end{tikzpicture}};
\node at (1.,2.533){\begin{tikzpicture}[scale=0.2,baseline=-3.5pt]%
\draw (0,0) node[inner sep=.5mm,minimum size=2mm](x1){\small$1$};
\draw (1,0) node[inner sep=.5mm,minimum size=2mm] (x3){\small$3$};
\draw (2,0) node[inner sep=.5mm,minimum size=2mm] (x2){\small$2$};
\draw[thick,looseness=1.]  (x1) to[out=90,in=90] (x3); 
\end{tikzpicture}};
\node at (-2.,1.433){\begin{tikzpicture}[scale=0.2,baseline=-3.5pt]%
\draw (0,0) node[inner sep=.5mm,minimum size=2mm](x1){\small$1$};
\draw (1,0) node[inner sep=.5mm,minimum size=2mm] (x2){\small$2$};
\draw (2,0) node[inner sep=.5mm,minimum size=2mm] (x3){\small$3$};
\end{tikzpicture}};
\node at (1.5,1.433){\begin{tikzpicture}[scale=0.2,baseline=-3.5pt]%
\draw (0,0) node[inner sep=.5mm,minimum size=2mm](x3){\small$3$};
\draw (1,0) node[inner sep=.5mm,minimum size=2mm] (x1){\small$1$};
\draw (2,0) node[inner sep=.5mm,minimum size=2mm] (x2){\small$2$};
\end{tikzpicture}};%
\node at (-1.,1.13){\begin{tikzpicture}[scale=0.2,baseline=-3.5pt]%
\draw (0,0) node[inner sep=.5mm,minimum size=2mm](x1){\small$1$};
\draw (1,0) node[inner sep=.5mm,minimum size=2mm] (x2){\small$2$};
\draw (2,0) node[inner sep=.5mm,minimum size=2mm] (x3){\small$3$};
\draw[thick,looseness=1.]  (x1) to[out=90,in=90] (x3); 
\draw[thick,looseness=1.25]  (x1) to[out=100,in=80] (x3); 
\end{tikzpicture}};
\node at (-2.7,0.463){\begin{tikzpicture}[scale=0.2,baseline=-3.5pt]%
\draw (0,0) node[inner sep=.5mm,minimum size=2mm](x1){\small$1$};
\draw (1,0) node[inner sep=.5mm,minimum size=2mm] (x2){\small$2$};
\draw (2,0) node[inner sep=.5mm,minimum size=2mm] (x3){\small$3$};
\draw[thick,looseness=1.]  (x1) to[out=90,in=90] (x2); 
\end{tikzpicture}};
\node at (-1.333,0.433){\begin{tikzpicture}[scale=0.2,baseline=-3.5pt]%
\draw (0,0) node[inner sep=.25mm,minimum size=2mm](x1){\small$1$};
\draw (1,0) node[inner sep=.25mm,minimum size=2mm] (x2){\small$2$};
\draw (2,0) node[inner sep=.25mm,minimum size=2mm] (x3){\small$3$};
\draw (0,0.3) node[inner sep=.5mm,minimum size=2mm](y1){};
\draw (2,0.3) node[inner sep=.5mm,minimum size=2mm] (y3){};
\draw[thick,looseness=1.]  (y1) to[out=90,in=90] (y3); 
\draw[thick,looseness=1.25]  (y1) to[out=105,in=75] (y3); 
\draw[thick,looseness=1.]  (x1) to[out=90,in=90] (x2); 
\end{tikzpicture}};
\node at (-0.5,0.233){\begin{tikzpicture}[scale=0.2,baseline=-3.5pt]%
\draw (0,0) node[inner sep=.5mm,minimum size=2mm](x1){\small$1$};
\draw (1,0) node[inner sep=.5mm,minimum size=2mm] (x2){\small$2$};
\draw (2,0) node[inner sep=.5mm,minimum size=2mm] (x3){\small$3$};
\draw[thick,looseness=.75]  (x1) to[out=90,in=90] (x3); 
\end{tikzpicture}};
\node at (0.,0.633){\begin{tikzpicture}[scale=0.2,baseline=-3.5pt]%
\draw (0,0) node[inner sep=.5mm,minimum size=2mm](x1){\small$1$};
\draw (1,0) node[inner sep=.5mm,minimum size=2mm] (x3){\small$3$};
\draw (2,0) node[inner sep=.5mm,minimum size=2mm] (x2){\small$2$};
\draw[thick,looseness=.75]  (x1) to[out=90,in=90] (x2); 
\end{tikzpicture}};
\node at (1.5,0.433){\begin{tikzpicture}[scale=0.2,baseline=-3.5pt]%
\draw (0,0) node[inner sep=.5mm,minimum size=2mm](x3){\small$3$};
\draw (1,0) node[inner sep=.5mm,minimum size=2mm] (x1){\small$1$};
\draw (2,0) node[inner sep=.5mm,minimum size=2mm] (x2){\small$2$};
\draw[thick,looseness=1.]  (x1) to[out=90,in=90] (x2); 
\end{tikzpicture}};
\node at (-2.7,-0.433){\begin{tikzpicture}[scale=0.2,baseline=-3.5pt]%
\draw (0,0) node[inner sep=.5mm,minimum size=2mm](x2){\small$2$};
\draw (1,0) node[inner sep=.5mm,minimum size=2mm] (x1){\small$1$};
\draw (2,0) node[inner sep=.5mm,minimum size=2mm] (x3){\small$3$};
\end{tikzpicture}};
\node at (-2.,-0.866){\begin{tikzpicture}[scale=0.2,baseline=-3.5pt]%
\draw (0,0) node[inner sep=.5mm,minimum size=2mm](x2){\small$2$};
\draw (1,0) node[inner sep=.5mm,minimum size=2mm] (x1){\small$1$};
\draw (2,0) node[inner sep=.5mm,minimum size=2mm] (x3){\small$3$};
\draw[thick,looseness=1.5]  (x1) to[out=90,in=90] (x3); 
\draw[thick,looseness=1.]  (x1) to[out=80,in=100] (x3); 
\end{tikzpicture}};
\node at (-1.,-0.866){\begin{tikzpicture}[scale=0.2,baseline=-3.5pt]%
\draw (0,0) node[inner sep=.5mm,minimum size=2mm](x2){\small$2$};
\draw (1,0) node[inner sep=.5mm,minimum size=2mm] (x1){\small$1$};
\draw (2,0) node[inner sep=.5mm,minimum size=2mm] (x3){\small$3$};
\draw[thick,looseness=1.]  (x1) to[out=90,in=90] (x3); 
\end{tikzpicture}};
\node at (0.,-0.866){\begin{tikzpicture}[scale=0.2,baseline=-3.5pt]%
\draw (0,0) node[inner sep=.5mm,minimum size=2mm](x2){\small$2$};
\draw (1,0) node[inner sep=.5mm,minimum size=2mm] (x3){\small$3$};
\draw (2,0) node[inner sep=.5mm,minimum size=2mm] (x1){\small$1$};
\end{tikzpicture}};
\node at (1.5,-0.866){\small$321$};
\end{tikzpicture}\hfill\vrule\hfill
\begin{tikzpicture}[scale=1.1]
\draw  [very thick]  (-0.866, -1.5) -- (1.732, 3.)  node [pos=1.,above,sloped]  {\scriptsize$x=z$};
\draw  [very thick]  (-1.732, 3.) -- (0.866, -1.5)  node [pos=1.05,above,sloped]  {\scriptsize$y=z$};
\draw  [very thick]  (-3., 0.) -- (2., 0.) node [pos=1.05,above,sloped]  {\scriptsize$x=y$};
\draw  [thick,dashed]  (-2.866, -1.5) -- (-0.268, 3.)  node [pos=1.1,above,sloped]  {\scriptsize$x=z+2$};
\draw  [thick,dashed]  (-1.866, -1.5) -- (0.732,  3.)  node [pos=1.1,above,sloped]  {\scriptsize$x=z+1$};
\draw  [thick,dashed]  (-3., 0.866) -- (2., 0.866) node [pos=1.1,above,sloped]  {\scriptsize$x=y+1$};
\node at (-1.,2.533) {\small$133$};
\node at (0.,2.533){\small$132$};
\node at (1.,2.533){\small$131$};
\node at (-2.,1.433){\small$123$};
\node at (1.5,1.433){\small$231$};
\node at (-1.,1.1){\small$122$};
\node at (-2.7,0.433){\small$113$};
\node at (-1.333,0.433){\small$112$};
\node at (-0.5,0.233){\small$111$};
\node at (0.,0.633){\small$121$};
\node at (1.5,0.433){\small$221$};
\node at (-2.7,-0.433){\small$213$};
\node at (-2.,-0.866){\small$212$};
\node at (-1.,-0.866){\small$211$};
\node at (0.,-0.866){\small$311$};
\node at (1.5,-0.866){\small$321$};
\end{tikzpicture}\hfill\strut
\caption{Pak-Stanley labelling of $\Ish_3$}
\label{isharr}
\end{figure}

Then it is not difficult to directly find the label of a given region (cf. Stanley \cite{Stan2} in the case where $\cA$ is the Shi arrangement). Let  again $\cR$ be defined as in \eqref{eqr}
and
\renewcommand{\theenumi}{\ref{def.PSl}.\arabic{enumi}}
\renewcommand{\labelenumi}{\theenumi. }
\begin{enumerate}
\item\label{PSl1} \textbf{take $\bt=\bt(\bw)=(t_1,\dotsc,t_n)$} where
$\  t_{w_i}=\big|\big\{j\leq i\mid w_j\geq w_i\big\}\big|\ .$
\item\label{PSl2} \textbf{add $(a_{ij}-1)\be_j$ to $\bt$}  for every  hyperplane $(i,j,a_{ij})\in\mathcal{H}$.
\item\label{PSl3} \textbf{add $m_{ij}\be_j$ to $\bt$} for every pair $(i,j)$ with $1\leq i<j\leq n$ and $x_i>x_j$
such that $(i,j,a)\notin\mathcal{H}$ for every $a\in\N$.
\end{enumerate}
\renewcommand{\theenumi}{\arabic{enumi}}
\renewcommand{\labelenumi}{\theenumi. }
In fact, 
$\bt(\bw)$ is the  label of the region of the \emph{Coxeter arrangement} \footnote{I.e., the arrangement  $\big\{C_{ij} \mid 1\leq i<j\leq n \big\}$.} (cf. \cite[ad.]{Stan})
$$\cR'=\big\{(x_1,\dotsc,x_n)\in\Reals^n\bigm| x_{w_1}>x_{w_2}>\dotsb>x_{w_n} \big\}$$
on the Pak-Stanley labelling, and is also the label of the (unique) region of $\cA$ contained in $\cR'$ adjacent to the line defined by $x_1=\dots=x_n$.
Clearly, this region is represented by the permutation $w_1\dotsb w_n$, where all pairs $(i,j)$ such that
$1\leq i<j\leq n$  and such that there exists in $\cA$ a hyperplane of equation $x_i-x_j=a_{ij}>0$ are covered by a single arc.
For example, for every integer $n\geq 2$ and every $2\leq k\leq n$,
$\ell(\cA^k_n,\cR_0)=
\begin{tikzpicture}[scale=.2,baseline=-3.5pt];
\draw (0,0) node[inner sep=.5mm,minimum size=2mm](x1){$1$};
\draw (1,0) node[inner sep=.5mm,minimum size=1mm] (x2){$2$};
\draw (2,0) node[inner sep=.5mm,minimum size=2mm] (xb1){${}\cdot{}$};
\draw (2.5,0) node[inner sep=.5mm,minimum size=2mm] (xb2){${}\cdot{}$};
\draw (3,0) node[inner sep=.5mm,minimum size=2mm] (xb3){${}\cdot{}$};
\draw (4,0) node[inner sep=.5mm,minimum size=1mm] (xn){\raisebox{-7.5pt}{$n$}};
\draw[thick,looseness=.5]  (x1) to[out=90,in=90] (xn) ;
\end{tikzpicture}.$

For every hyperplane that is crossed, either the color of the arc connecting $i$ and $j$ is increased by one or, if the color is already as high as possible, the arc disappears.
Hence,  e.g. the region separated of $\mathcal{R}_0$ by the hyperplane of equation $x_1=x_n+1$ is
represented by
 \begin{tikzpicture}[scale=.225,baseline=-3.5pt];
\draw (0,0) node[inner sep=.5mm,minimum size=2mm](x1){$1$};
\draw (1,0) node[inner sep=.5mm,minimum size=1mm] (x2){$2$};
\draw (2,0) node[inner sep=.5mm,minimum size=2mm] (xb1){${}\cdot{}$};
\draw (2.5,0) node[inner sep=.5mm,minimum size=2mm] (xb2){${}\cdot{}$};
\draw (3,0) node[inner sep=.5mm,minimum size=2mm] (xb3){${}\cdot{}$};
\draw (6,0) node[inner sep=.25mm,minimum size=1mm] (xnm){\raisebox{-7.5pt}{\small$(\text{\normalsize$n\!\!-\!\!1$})$}};
\draw (9,0) node[inner sep=.25mm,minimum size=1mm] (xn){\raisebox{-7.5pt}{$n$}};
\draw[thick,looseness=.35]  (x1) to[out=90,in=90] (xnm) ;
\draw[thick,looseness=.475]  (x1) to[out=90,in=90] (xn) ;
\draw[thick,looseness=.4]  (x1) to[out=80,in=100] (xn) ;
\end{tikzpicture} and its Pak-Stanley label is $1\dotsb12$.

Note that our representation in the Ish case, since $1$ is the initial point of all arcs, is equivalent to the representation already given by Armstrong and Rhoades \cite{AR} and used by Leven, Rhoades and Wilson \cite{LRW}.

For another example, let $n=4$ and consider the region in $\cA^k_4$ of label $2311$ which is adjacent to the line defined by $x_1=x_2=x_3=x_4$ and contained in
$$\cR'=\big\{(x_1,x_2,x_3,x_4)\in\Reals^4\mid x_3>x_1>x_4>x_2\big\}\,.$$
This region is represented by
\begin{tikzpicture}[scale=.2,baseline=-3.5pt];
\draw (0,0) node[inner sep=.5mm,minimum size=2mm](x3){$3$};
\draw (1,0) node[inner sep=.5mm,minimum size=1mm] (x1){$1$};
\draw (2,0) node[inner sep=.5mm,minimum size=2mm] (x4){$4$};
\draw (3,0) node[inner sep=.5mm,minimum size=2mm] (x2){$2$};
\draw[thick,looseness=1.]  (x3) to[out=90,in=90] (x4) ;
\draw[thick,looseness=1.]  (x1) to[out=90,in=90] (x2) ;
\end{tikzpicture}
in $\Shi_4=\cA^2_4$ and in $\cA^3_4$, and by
\begin{tikzpicture}[scale=.2,baseline=-3.5pt];
\draw (0,0) node[inner sep=.5mm,minimum size=2mm](x3){$3$};
\draw (1,0) node[inner sep=.5mm,minimum size=1mm] (x1){$1$};
\draw (2,0) node[inner sep=.5mm,minimum size=2mm] (x4){$4$};
\draw (3,0) node[inner sep=.5mm,minimum size=2mm] (x2){$2$};
\draw[thick,looseness=1.]  (x1) to[out=90,in=90] (x2) ;
\end{tikzpicture} in $\Ish_4=\cA^4_4$.
In all the three cases,  there are five regions contained in $\cR'$ which are described in Table~\ref{tbl}.

\begin{table}[ht]
\setlength{\tabcolsep}{4pt}
\begin{center}
\begin{tabular}{|r|c|c|c|c|c||r|c|c|c|c|c|}
\hline
\multicolumn{6}{|c||}{$\Shi_4=\cA^2_4\quad/\quad\cA^3_4$}&\multicolumn{6}{c|}{$\Ish_4=\cA^4_4$}\\
\hline
{\footnotesize\textbf{Region}}&
\begin{tikzpicture}[scale=.2,baseline=-3.5pt];
\draw (0,0) node[inner sep=.5mm,minimum size=2mm](x3){$3$};
\draw (1,0) node[inner sep=.5mm,minimum size=1mm] (x1){$1$};
\draw (2,0) node[inner sep=.5mm,minimum size=2mm] (x4){$4$};
\draw (3,0) node[inner sep=.5mm,minimum size=2mm] (x2){$2$};
\draw[thick,looseness=.75]  (x3) to[out=90,in=90] (x4) ;
\draw[thick,looseness=.75]  (x1) to[out=90,in=90] (x2) ;
\end{tikzpicture}&
\begin{tikzpicture}[scale=.2,baseline=-3.5pt];
\draw (0,0) node[inner sep=.5mm,minimum size=2mm](x3){$3$};
\draw (1,0) node[inner sep=.5mm,minimum size=1mm] (x1){$1$};
\draw (2,0) node[inner sep=.5mm,minimum size=2mm] (x4){$4$};
\draw (3,0) node[inner sep=.5mm,minimum size=2mm] (x2){$2$};
\draw[thick,looseness=.75]  (x1) to[out=90,in=90] (x2) ;
\end{tikzpicture}&
\begin{tikzpicture}[scale=.2,baseline=-3.5pt];
\draw (0,0) node[inner sep=.5mm,minimum size=2mm](x3){$3$};
\draw (1,0) node[inner sep=.5mm,minimum size=1mm] (x1){$1$};
\draw (2,0) node[inner sep=.5mm,minimum size=2mm] (x4){$4$};
\draw (3,0) node[inner sep=.5mm,minimum size=2mm] (x2){$2$};
\draw[thick,looseness=.75]  (x3) to[out=90,in=90] (x4) ;
\end{tikzpicture}&
\begin{tikzpicture}[scale=.2,baseline=-3.5pt];
\draw (0,0) node[inner sep=.5mm,minimum size=2mm](x3){$3$};
\draw (1,0) node[inner sep=.5mm,minimum size=1mm] (x1){$1$};
\draw (2,0) node[inner sep=.5mm,minimum size=2mm] (x4){$4$};
\draw (3,0) node[inner sep=.5mm,minimum size=2mm] (x2){$2$};
\draw[thick,looseness=1.]  (x1) to[out=90,in=90] (x4) ;
\end{tikzpicture}&
\begin{tikzpicture}[scale=.2,baseline=-3.5pt];
\draw (0,0) node[inner sep=.5mm,minimum size=2mm](x3){$3$};
\draw (1,0) node[inner sep=.5mm,minimum size=1mm] (x1){$1$};
\draw (2,0) node[inner sep=.5mm,minimum size=2mm] (x4){$4$};
\draw (3,0) node[inner sep=.5mm,minimum size=2mm] (x2){$2$};
\end{tikzpicture}&
{\footnotesize\textbf{Region}}&
\begin{tikzpicture}[scale=.2,baseline=-3.5pt];
\draw (0,0) node[inner sep=.5mm,minimum size=2mm](x3){$3$};
\draw (1,0) node[inner sep=.5mm,minimum size=1mm] (x1){$1$};
\draw (2,0) node[inner sep=.5mm,minimum size=2mm] (x4){$4$};
\draw (3,0) node[inner sep=.5mm,minimum size=2mm] (x2){$2$};
\draw[thick,looseness=.75]  (x1) to[out=90,in=90] (x2) ;
\end{tikzpicture}&
\begin{tikzpicture}[scale=.2,baseline=-3.5pt];
\draw (0,0) node[inner sep=.5mm,minimum size=2mm](x3){$3$};
\draw (1,0) node[inner sep=.5mm,minimum size=1mm] (x1){$1$};
\draw (2,0) node[inner sep=.5mm,minimum size=2mm] (x4){$4$};
\draw (3,0) node[inner sep=.5mm,minimum size=2mm] (x2){$2$};
\draw[thick,looseness=1.]  (x1) to[out=90,in=90] (x4) ;
\end{tikzpicture}&
\begin{tikzpicture}[scale=.2,baseline=-3.5pt];
\draw (0,0) node[inner sep=.5mm,minimum size=2mm](x3){$3$};
\draw (1,0) node[inner sep=.5mm,minimum size=1mm] (x1){$1$};
\draw (2,0) node[inner sep=.5mm,minimum size=2mm] (x4){$4$};
\draw (3,0) node[inner sep=.5mm,minimum size=2mm] (x2){$2$};
\draw[thick,looseness=1.1]  (x1) to[out=90,in=90] (x4) ;
\draw[thick,looseness=1.4]  (x1) to[out=100,in=80] (x4) ;
\end{tikzpicture}&
\begin{tikzpicture}[scale=.2,baseline=-3.5pt];
\draw (0,0) node[inner sep=.5mm,minimum size=2mm](x3){$3$};
\draw (1,0) node[inner sep=.5mm,minimum size=1mm] (x1){$1$};
\draw (2,0) node[inner sep=.5mm,minimum size=2mm] (x4){$4$};
\draw (3,0) node[inner sep=.5mm,minimum size=2mm] (x2){$2$};
\draw[thick,looseness=.7]  (x1) to[out=90,in=90] (x4) ;
\draw[thick,looseness=1.1]  (x1) to[out=100,in=80] (x4) ;
\draw[thick,looseness=1.4]  (x1) to[out=110,in=70] (x4) ;
\end{tikzpicture}&
\begin{tikzpicture}[scale=.2,baseline=-3.5pt];
\draw (0,0) node[inner sep=.5mm,minimum size=2mm](x3){$3$};
\draw (1,0) node[inner sep=.5mm,minimum size=1mm] (x1){$1$};
\draw (2,0) node[inner sep=.5mm,minimum size=2mm] (x4){$4$};
\draw (3,0) node[inner sep=.5mm,minimum size=2mm] (x2){$2$};
\end{tikzpicture}\\
{\footnotesize\textbf{Label}}&$2311$&$2312$&$2411$&$2412$&$2413$&
{\footnotesize\textbf{Label}}&$2311$&$2411$&$2412$&$2413$&$2414$\\
\hline
\end{tabular}
\end{center}
\medskip
\caption{Labels in $\cA^k_4$ of the regions whose points satisfy $x_3>x_1>x_4>x_2$.}
\label{tbl}
\end{table}
Note that in all three arrangements the regions labelled $2411$ are separated from the region labelled $2311$ by the hyperplane of
equation $x_1-x_2=1$. The first label is given by \eqref{PSl1} and the second one by \eqref{PSl3}.
Now, the regions labelled $2411$ and $2412$ on the left-hand side of the table are separated from each other by the hyperplane of equation $x_3-x_4=1$, whereas the latter is separated from the region labelled $2413$ by the hyperplane of equation $x_1-x_4=1$.
Hence, $2412$ and $2413$ are also labels given by \eqref{PSl3}.
The regions labelled $2411$, $2412$, $2413$ and $2414$ on the right-hand side of the table are separated from one another by the hyperplane of equation $x_1-x_4=a$, where $a=1$ and $a=2$, and where $a=3$, respectively.
The first two labels, $2412$ and $2413$, are given by \eqref{PSl2} and the last one, $2414$,
by \eqref{PSl3}.

Finally, note that in $\Ish_4$ the region labelled by $2312$ is not contained in $\cR'$. In fact,
$2312=2211+0101=
\ell\big(\cA^4_4,\begin{tikzpicture}[scale=.2,baseline=-3.5pt];
\draw (0,0) node[inner sep=.5mm,minimum size=2mm](x3){$3$};
\draw (1,0) node[inner sep=.5mm,minimum size=2mm] (x1){$1$};
\draw (2,0) node[inner sep=.5mm,minimum size=2mm] (x2){$2$};
\draw (3,0) node[inner sep=.5mm,minimum size=2mm] (x4){$4$};
\draw[thick,looseness=.75]  (x1) to[out=90,in=90] (x4);
\draw[thick,looseness=.95]  (x1) to[out=100,in=80] (x4);
\end{tikzpicture}\big)$
since we have $\mathcal{H}=$ {\small$\big\{(1,4,2)\big\}$ for the region
\begin{tikzpicture}[scale=.2,baseline=-3.5pt];
\draw (0,0) node[inner sep=.5mm,minimum size=2mm](x3){$3$};
\draw (1,0) node[inner sep=.5mm,minimum size=2mm] (x1){$1$};
\draw (2,0) node[inner sep=.5mm,minimum size=2mm] (x2){$2$};
\draw (3,0) node[inner sep=.5mm,minimum size=2mm] (x4){$4$};
\draw[thick,looseness=.75]  (x1) to[out=90,in=90] (x4);
\draw[thick,looseness=.95]  (x1) to[out=100,in=80] (x4);
\end{tikzpicture} of $\cA^4_4$
and hence $(1,2,a)\notin\mathcal{H}$ for every $a\in\N$ --- although in this region $x_1>x_2$.
In both the remaining arrangements, $\cA^2_4$ and $\cA^3_4$,
$\mathcal{H}=$ {\small$\big\{(1,2,1),(1,4,1)\big\}$} for the region
$\begin{tikzpicture}[scale=.2,baseline=-3.5pt];
\draw (0,0) node[inner sep=.5mm,minimum size=2mm](x3){$3$};
\draw (1,0) node[inner sep=.5mm,minimum size=2mm] (x1){$1$};
\draw (2,0) node[inner sep=.5mm,minimum size=2mm] (x4){$4$};
\draw (3,0) node[inner sep=.5mm,minimum size=2mm] (x2){$2$};
\draw[thick,looseness=.75]  (x1) to[out=90,in=90] (x2);
\end{tikzpicture}$, and hence
$(3,4,a)\notin\mathcal{H}$ for every $a\in\N$. Yet, the hyperplane of equation
$x_3-x_4=1$ belongs to both arrangements.}
\footnote{Note that
$\ell\big(\cA^3_4,\begin{tikzpicture}[scale=.175,baseline=-3.5pt];
\draw (0,0) node[inner sep=.5mm,minimum size=2mm](x3){$3$};
\draw (1,0) node[inner sep=.5mm,minimum size=2mm] (x1){$1$};
\draw (2,0) node[inner sep=.5mm,minimum size=2mm] (x2){$2$};
\draw (3,0) node[inner sep=.5mm,minimum size=2mm] (x4){$4$};
\draw[thick,looseness=.7]  (x1) to[out=90,in=90] (x4);
\draw[thick,looseness=1.]  (x1) to[out=100,in=80] (x4);
\end{tikzpicture}\big)=231\mathbf{3}$.}

\smallskip
In the right-hand side of Figure~\ref{isharr} the Pak-Stanley labelling of the regions of $\Ish_3$ is shown.
In dimension $n$, these labels form the set of \emph{$n$-dimensional Ish-parking functions}, 
characterized in a previous article \cite{DGO3}.
The labels of the regions of $\Shi_n$ form the set of \emph{$n$-dimensional parking functions}, defined below, as proven by Pak and Stanley in their seminal work \cite{Stan}. 

Parking functions and Ish-parking functions, as well as the Pak-Stanley labels of $\cA^k_n$ for $2<k<n$, are   \emph{graphical parking functions} 
as introduced by Postnikov and Shapiro \cite{PosSha} and  reformulated by Mazin \cite{Mazin}.

\section{Graphical parking functions}
\begin{definition} [\cite{Mazin}, ad.]
\label{def-GPark}
Let $\cG=(V,A)$ be a (finite) directed loopless connected multigraph, where $V=[n]$ 
for some natural  $n$. Then
$\ba=(a_1,\dotsc,a_n)\in\N^n$ is a \emph{$\cG$-parking function} if for every non-empty subset $I\subseteq[n]$ there exists a vertex $i\in I$ such that the number of arcs $(i,j)\in A$ with $j\notin I$, counted with multiplicity, is greater than $a_i-2$.
\end{definition}

Given the arrangement $\cA^k_n$, consider a multigraph $\cG^k_n$ where for each hyperplane of equation $x_i=x_j$ there is a corresponding arc $(i,j)$, and for each hyperplane of equation $x_i=x_j+a$ with $a\in\N$ there is a corresponding
arc $(j,i)$. In Figure~\ref{fig1}, the graphs $\cG^2_4$, $\cG^3_4$ and $\cG^4_4$ are shown.
Note that $\cG^2_n$ is the complete digraph $K_n$ on $n$ vertices.
We will use the following crucial result.

\begin{theorem}[Mazin \cite{Mazin}, ad.]
For every $2\leq k\leq n$,  the set
\begin{equation*}
\big\{\ell(\cA^k_n,\cR)\mid \text{ $\cR$ is a region of $\cA^k_n$}\big\} \end{equation*}
 is the set of $\cG^k_n$-parking functions. 
\end{theorem}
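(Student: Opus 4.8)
The plan is to prove the two inclusions separately, and to dispose first of the containment ``every label is a $\cG^k_n$-parking function'', which is the transparent half. Fix a region $\cR$ as in \eqref{eqr}, put $\ba=\ell(\cA^k_n,\cR)$, choose a point $\bx\in\cR$, and let $p\in I$ be the index maximising $x_p$ over a given nonempty $I\subseteq[n]$; I claim $p$ is the witness demanded by Definition~\ref{def-GPark}. Iterating Definition~\ref{def.PSl} along a path from $\cR_0$ to $\cR$ shows that $a_p$ equals $1$ plus the number of hyperplanes separating $\cR$ from $\cR_0$ whose crossing adds $\be_p$; by the labelling rule these are exactly the separating hyperplanes $C_{pj}$ $(p<j)$, $S_{ip}$ $(k\le i<p)$ and $I_{ip}$ $(1\le i<\min(k,p))$, and each of them corresponds to an out-arc $(p,j)$, $(p,i)$, or one of the parallel copies of $(p,1)$ of $\cG^k_n$. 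Such a hyperplane separates $\cR$ from $\cR_0$ only if the coordinate at its other endpoint exceeds $x_p$; since $p$ maximises $x_p$ over $I$, that endpoint lies outside $I$. Hence the $a_p-1$ separating hyperplanes inject into the out-arcs of $p$ leaving $I$, giving $a_p-1\le\#\{(p,\ell)\in A\mid \ell\notin I\}$, precisely the inequality of Definition~\ref{def-GPark}.

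For the reverse inclusion I would invert this process, reconstructing from a $\cG^k_n$-parking function $\ba$ a region of label $\ba$ by fixing the order $x_{w_1}>\dotsb>x_{w_n}$ from the top down. Having placed the indices of a set $U$ at the largest coordinates, I would place a further index $p$ from $R=[n]\setminus U$ just below them; as the fine value of $x_p$ decreases across the thresholds contributed by $U$, the value forced on $a_p$ runs through every integer from $\#\{j\in U\mid j>p\}+1$ up to $1+\#\{(p,\ell)\in A\mid \ell\in U\}$. The defining property of $\ba$, applied to the subset $I=R$, furnishes at each stage an index $p\in R$ with $a_p-1\le\#\{(p,\ell)\in A\mid \ell\in U\}$, that is, one that can be placed so as to realise its prescribed value. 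Iterating yields a permutation $\bw$ together with a consistent choice of side for every shifted hyperplane, hence a region of $\cA^k_n$ whose label is $\ba$.

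The main obstacle is exactly this reverse inclusion: I must check that the greedy placement never stalls and that the resulting fine coordinates are simultaneously consistent. Since a single value $x_p$ controls the $S$-type and $I$-type thresholds at once, the point to verify is that the attainable values of $a_p$ really do fill the whole integer interval claimed above, and that at each stage one can choose a placeable $p\in R$ for which the lower bound $a_p-1\ge\#\{j\in U\mid j>p\}$ holds as well; marshalling the parking condition over all subsets to guarantee this is the crux, in the manner of Dhar's burning algorithm. I would also note the shortcut that, reading the two bullet points after the definition of $\cA^k_n$ as the statement that for each pair $\{i,j\}$ the constants $c$ with $\{x_i-x_j=c\}\in\cA^k_n$ form an interval $\{0,1,\dotsc,m_{ij}\}$, the arrangement $\cA^k_n$ is the multigraph arrangement attached to $\cG^k_n$, so that the theorem is an instance of Mazin's; the explicit hyperplane-to-arc correspondence above is what makes that identification concrete.
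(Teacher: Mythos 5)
You should first be aware that the paper does not prove this statement at all: the tag ``[Mazin, ad.]'' means it is imported wholesale from \cite{Mazin}, the only work required being the observation that $\cA^k_n$ is precisely the multigraph arrangement attached to $\cG^k_n$ (for each pair $i<j$ the constants $c$ such that $x_i-x_j=c$ defines a hyperplane of $\cA^k_n$ form an interval $\{0,1,\dotsc,m_{ij}\}$, and the arc multiplicities of $\cG^k_n$ record the numbers $m_{ij}$). That observation is exactly the ``shortcut'' in your last sentences, so as far as matching the paper goes, your closing remark \emph{is} the intended argument, and it is adequate.

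Your direct verification of the inclusion ``every label is a $\cG^k_n$-parking function'' is correct and is a genuine addition: identifying the separating hyperplanes that contribute $\be_p$ with out-arcs of $p$, and noting that such a hyperplane separates $\cR$ from $\cR_0$ only when the coordinate at the head of the corresponding arc exceeds $x_p=\max_{q\in I}x_q$, does yield $a_p-1\le\#\{(p,\ell)\in A\mid\ell\notin I\}$, which is the inequality of Definition~\ref{def-GPark}. The reverse inclusion, however, is genuinely gapped, as you yourself concede. Two concrete problems: (i) the attainable values of $a_p$ at a given stage need not start at $\#\{j\in U\mid j>p\}+1$, because the fine coordinates of $U$ fixed at earlier stages may already be spread over more than a unit interval, so some $S$- or $I$-type thresholds are crossed even when $x_p$ sits just below $\min_{j\in U}x_j$; and (ii) the parking condition applied to $I=R$ only supplies \emph{some} $p\in R$ with $a_p-1\le\#\{(p,\ell)\in A\mid\ell\in U\}$, an upper bound, and nothing in your sketch guarantees that this same $p$ also meets the required lower bound. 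Repairing this is essentially a Dhar-type burning argument; in this paper that machinery (Proposition~\ref{propant} and the $\dfs$-Burning Algorithm of \cite{PYY}) is developed in Section~4 and deployed to prove Lemma~\ref{Ishlabels}, not this theorem. So either stop at the citation of \cite{Mazin}, as the paper does, or commit to the full burning-algorithm proof; the hybrid as written does not close the second inclusion.
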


\begin{figure}[t]
\noindent
\strut\hfill
\begin{tikzpicture}[scale=1.75]
\draw  [directed, thick]  (0,-1) to [out=75,in=295] (0,0);
\draw  [directed]  (0,0) -- (0,-1);
\draw  [directed]  (0,0) -- (.86,.5);
\draw  [directed]  (0,0) -- (-.86,.5);
\draw  [directed, thick]  (.86,.5) to [out=195,in=55] (0,0);
\draw  [directed, thick]  (-.86,.5) to [out=315,in=170] (0,0);
\draw  [directed]  (.86,.5) -- (0,-1);
\draw [fill] (0,0) node [above] {$1$} circle [radius=.0425];
\draw [fill] (0,-1) node [below] {$4$} circle [radius=.0425];
\draw  [directed]  (-.86,.5) -- (0,-1) ;
\draw [fill] (-.86,.5) node [above left] {$2$} circle [radius=.0425];
\draw  [directed]  (-.86,.5) -- (.86,.5) ;
\draw [fill] (.86,.5) node [above right] {$3$} circle [radius=.0425];
\draw  [directed, thick]  (.86,.5) to [out=195,in=345] (-.86,.5);
\draw  [directed, thick]  (0,-1) to [out=75,in=225] (.86,.5);
\draw  [directed, thick]  (0,-1) to [out=100,in=315] (-.86,.5);
\end{tikzpicture}
 \hfill\vrule\hfill
\begin{tikzpicture}[scale=1.75]
\draw  [directed, thick]  (0,-1) to [out=75,in=295] (0,0);
\draw  [directed]  (0,0) -- (0,-1);
\draw  [directed]  (0,0) -- (.86,.5);
\draw  [directed]  (0,0) -- (-.86,.5);
\draw  [directed, thick]  (.86,.5) to [out=195,in=55] (0,0);
\draw  [directed, thick]  (-.86,.5) to [out=315,in=170] (0,0);
\draw  [directed, thick]  (.86,.5) to [out=185,in=150]  (-.08,.0525) -- (0,0);
\draw  [directed]  (.86,.5) -- (0,-1);
\draw [fill] (0,0) node [above] {$1$} circle [radius=.0425];
\draw [fill] (0,-1) node [below] {$4$} circle [radius=.0425];
\draw  [directed]  (-.86,.5) -- (0,-1) ;
\draw [fill] (-.86,.5) node [above left] {$2$} circle [radius=.0425];
\draw  [directed]  (-.86,.5) -- (.86,.5) ;
\draw [fill] (.86,.5) node [above right] {$3$} circle [radius=.0425];
\draw  [directed, thick]  (0,-1) to [out=75,in=225] (.86,.5);
\draw  [directed, thick]  (0,-1) to [out=105,in=170] (0,0);
\end{tikzpicture}
\hfill\vrule\hfill
\begin{tikzpicture}[scale=1.75]
\draw  [directed, thick]  (0,-1) to [out=75,in=295] (0,0);
\draw  [directed]  (0,0) -- (0,-1);
\draw  [directed]  (0,0) -- (.86,.5);
\draw  [directed]  (0,0) -- (-.86,.5);
\draw  [directed, thick]  (.86,.5) to [out=195,in=55] (0,0);
\draw  [directed, thick]  (.86,.5) to [out=185,in=150]  (-.08,.0525) -- (0,0);
\draw  [directed, thick]  (-.86,.5) to [out=315,in=170] (0,0);
\draw  [directed]  (.86,.5) -- (0,-1);
\draw [fill] (0,0) node [above] {$1$} circle [radius=.0425];
\draw [fill] (0,-1) node [below] {$4$} circle [radius=.0425];
\draw  [directed]  (-.86,.5) -- (0,-1) ;
\draw [fill] (-.86,.5) node [above left] {$2$} circle [radius=.0425];
\draw  [directed]  (-.86,.5) -- (.86,.5) ;
\draw [fill] (.86,.5) node [above right] {$3$} circle [radius=.0425];
\draw  [directed, thick]  (0,-1) to [out=65,in=10] (0,0);
\draw  [directed, thick]  (0,-1) to [out=105,in=170] (0,0);
\end{tikzpicture}
\hfill\strut

\strut\hspace{1.875cm}$\cG^2_4=K_4$\hfill$\cG^3_4$\quad\hfill$\cG_4^4$\hspace{2.5cm}\strut
\caption{Directed multi-graphs associated with $\Shi_4=\cA_4^2$, with $\cA_4^3$  and with $\Ish_4=\cA_4^4$.}\label{fig1}
\end{figure}

\subsection{Parking functions}
\begin{definition}\label{def31}
The $n$-tuple $\ba=(a_1,\dotsc,a_n)\in[n]^n$ is an $n$-dimensional parking function if
\footnote{With this definition, $\mathbf{1}:=(1,\dotsc,1)\in[n]^n$ \emph{is} a parking function and
$\mathbf{0}:=(0,\dotsc,0)\in[n]^n$ \emph{is not}. Parking functions are sometimes defined differently, so as to contain $\mathbf{0}$ (and not $\mathbf{1}$).
In that case, they are the elements of form $\bb=\ba-\mathbf{1}$ for  $\ba$ a parking function in the current sense.} 
\begin{equation*}\big|\big\{j\in[n]\mid a_j\leq i\big\}\big|\geq i\,,\quad \forall i\in[n]\,.\end{equation*}
\end{definition}

Note that parking functions (sometimes called \emph{classical parking functions}) are indeed $\cG^2_n$-parking functions, being  $\cG^2_n=K_n$, the complete digraph on $[n]$. In fact, suppose that $\ba$ is a $K_n$-parking function.
Then, given $i \in [n]$, let $I = $ {\small$\{ j \in [n] \mid a_j > i \}$}.
If $I = \varnothing$, then $| \{ j \in [n] \mid a_j \leq i \} | = n \geq i$.
If $I \neq \varnothing$, then there is $\ell \in I$ such that $| ${\small $\{ (\ell,j) \in A \mid j \notin I \}$}$|\geq a_\ell-1$ and so
$$ | \{ j \in [n] \mid a_j \leq i \} | = | \{ (\ell,j) \in A \mid j \notin I \} | \geq a_\ell-1 \geq i\,,$$
the last inequality since ${\ell\in I}$. The other direction is obvious.

Konheim and Weiss \cite{KW} introduced the concept of parking functions that can be thus described. Suppose  that $n$ drivers want to park in a one-way street with exactly $n$ places
and that $\ba\in[n]^n$ is the record of the preferred parking slots, that is, $a_i$ is the preferred parking place of driver $i\in[n]$. They enter the street one by one, driver $i$ immediately after driver $i-1$ parks, directly looks after his/her favourite slot, and if it is occupied he/she tries to park in the first free slot thereafter --- or leaves the street if no one exists.
Konheim and Weiss showed that $\ba$ is a parking function if and only if  all the drivers can park in the street in this way. 

In other words, consider the following  algorithm.

{\small
\begin{algorithm}[h]
\begin{quote}
\begin{algorithmic}[1]
  \Statex
  \Statex{\hspace{-0.5cm}\sc Parking Algorithm}
  \Statex{\bf Input:}\  $\ba\in[n]^n$
  \State $\spp =(0,\dotsc,0)\in\Z^{2n}$
      \ForAll{$i\in[n]$ in \emph{descending} order}
          \State $p=a_i$
          \While{$\spp(p) \neq 0$}
          \State increase $p$
          \EndWhile
          \State $\pp(i)=p$.
          \State $\spp(p)=i$
      \EndFor
  \Statex{\bf Output:}\  {$\spp$, $\pp$}
\end{algorithmic}
\end{quote}
\end{algorithm}

We say that \emph{$\ba$ parks $i\in[n]$} if 
 $\pp(i)\leq n$.
Parking functions are those which park every element,
or, equivalently, if we set
\begin{align*}
&\ff:=\min\{i\in[n+1]\mid \spp(i)=0\}\quad\text{and}\\
&\op=\spp^{-1}([n])\,,
\end{align*}
those for which $\ff=n+1$ or those for which $\op=[n]$.

Note that by Definition~\ref{def31} $\mathfrak{S}_n$ acts on the set $\PF_n$ of size $n$ parking functions: if
$\bw\in\mathfrak{S}_n$ and $\bw(\ba):=\ba\circ\bw=(a_{w_1},\dotsc,a_{w_n})$, then $\ba\in\PF_n$ if and only if $\bw(\ba)\in\PF_n$. 
In fact, this is a particular case of a more general situation, described in the following result.
\begin{lemma}\label{shift}
Given $\ba\in[n]^n$ and $\bw\in\mathfrak{S}_n$,
$$\op(\ba)=\op(\ba\circ\bw)\,.$$
\end{lemma}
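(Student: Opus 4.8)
The plan is to show that the final occupied set produced by the Parking Algorithm depends only on the \emph{multiset} $\{a_1,\dots,a_n\}$ of preferences, and not on the order in which the drivers are processed. Since $\ba$ and $\ba\circ\bw$ record the same multiset of preferences, $\op(\ba)=\op(\ba\circ\bw)$ will follow at once, because by definition $\op=\spp^{-1}([n])$ is exactly the set of occupied positions.

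First I would reduce to the case where $\bw$ is an adjacent transposition. If the identity $\op(\bb)=\op(\bb\circ\tau)$ holds for every $\bb\in[n]^n$ and every adjacent transposition $\tau=(k,k+1)$, then for a general $\bw=\tau_1\cdots\tau_r$ written as a product of adjacent transpositions one gets, by induction on $r$,
$$\op(\ba\circ\bw)=\op\big((\ba\circ\tau_1\cdots\tau_{r-1})\circ\tau_r\big)=\op(\ba\circ\tau_1\cdots\tau_{r-1})=\dots=\op(\ba),$$
each middle equality being an instance of the adjacent-transposition case applied to a suitable input. Moreover, since the algorithm treats the drivers in \emph{descending} order of their index, replacing $\ba$ by $\ba\circ(k,k+1)$ has exactly the effect of swapping the order in which the two drivers carrying the preferences $a_k$ and $a_{k+1}$ are processed, while leaving every other driver—and the occupied set present before these two are treated—unchanged.

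It therefore suffices to prove the following swap lemma: if $O$ is the set of occupied positions before two consecutive drivers, with preferences $x$ and $y$, are processed, then the occupied set afterwards is the same whether one treats $x$ first or $y$ first. To see this, write $\min\{p\ge z\mid p\notin O\}$ for the slot a driver of preference $z$ takes, assume without loss of generality $x\le y$, and let $f_1<f_2<\cdots$ be the free positions that are $\ge x$, with $f_k$ the least one that is $\ge y$. A short computation of the two ``first free'' scans shows that, in either order, the two newly occupied slots are precisely $f_1$ and $f_m$ with $m=\max(k,2)$; hence the resulting occupied set $O\cup\{f_1,f_m\}$ is the same either way, and all later drivers—which depend only on the occupied set, not on who occupies which slot—behave identically. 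This yields $\op(\bb)=\op(\bb\circ(k,k+1))$ and closes the reduction.

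The main obstacle is the swap lemma itself: the inner \textbf{while} loop makes the slot a driver ultimately takes a global function of the current occupied set, so one must check carefully that the two scans land on the correct positions in each of the two cases $y\le f_1$ (where $k=1$ and the second slot is $f_2$) and $y>f_1$ (where the second slot is $f_k$). Once the ``first free'' function is pinned down this is routine, but it is the only place where the mechanics of the algorithm genuinely enter; everything else is bookkeeping. An alternative, order-free route would be to prove directly that the number of free positions in $[1,t]$ equals $\max_{0\le s\le t}\big(s-|\{i\mid a_i\le s\}|\big)$ for every $t$, a quantity manifestly invariant under permuting $\ba$; but the exchange argument above is shorter to set up.
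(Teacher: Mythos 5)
Your proposal is correct and follows essentially the same route as the paper: reduce to an adjacent transposition and then check, by a short case analysis on the two swapped drivers, that the same two slots end up occupied regardless of the order in which those two preferences are processed (the paper phrases the cases via $\alpha=\pp(i+1)$ and $\beta=\pp(i)$ rather than via the list of free positions, but the content is identical). The observation that all later drivers depend only on the occupied set, not on which driver sits where, is the same implicit step the paper relies on.
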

\begin{proof}
It is sufficient to prove the claim when $\bw$ is the transposition $(i\,i+1)$ for some $i\in[n-1]$. Let
$\bb:=\ba\circ\bw=(b_1,\dotsc,b_n)=(a_1,\dotsc,a_{i-1},a_{i+1},a_i,a_{i+2},\dotsc,a_n)$, 
$\alpha:=\pp(i+1)\geq a_{i+1}$ and $\beta:=\pp(i)\geq a_i$ when  the Parking Algorithm is applied to $\ba$.

Suppose that $\beta<\alpha$. Then, since $a_i\leq\beta$, 
 $\beta=\pp(i)$ and $\alpha=\pp(i+1)$ when the algorithm is applied to $\bb$.
Now, suppose that $\alpha<\beta$. Hence,  if $b_{i+1}\,(=\!a_i)>\alpha$, then $\beta=\pp(i+1)$ and $\alpha=\pp(i)$ when the algorithm is applied to $\bb$, and if  $b_{i+1}\leq\alpha$, then $\alpha=\pp(i+1)$ and $\beta=\pp(i)$.
\end{proof}

\subsection{Ish-parking functions}
The labels of the regions of $\Ish_n$,  the \emph{Ish-parking functions},  are characterized as follows. 
\begin{definition} 
Let $\ba=(a_1,\dotsc,a_n)\in\N^n$ and $1<m\leq n$.
The \emph{centre}  of $\ba$,
$Z(\ba)$, is the (possibly empty) largest set $Z = \{ i_1, \ldots, i_m \}$ contained in $[n]$  with $\ n\geq i_1 > \cdots > i_m\geq1\ $ 
and the property
\footnote{Note that if this property holds for both  $X,Y\subseteq[n]$ then it holds for $X\cup Y$, and so this concept is well-defined  (cf. \cite{DGO,DGO2,DGO3}). The centre was previously called the \emph{reverse centre} \cite{DGO3}.} 
that $a_{i_j} \leq j$ for every $j \in [m]$.
\end{definition}
\begin{theorem}[{\cite[Proposition 3.12]{DGO3}}]\label{def.ipf}
The function 
$\ba\in[n]^n$ is an  \emph{Ish-parking function} if and only if $1\in Z(\ba)$.\hfill\qed
\end{theorem}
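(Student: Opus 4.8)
The plan is to invoke the theorem of Mazin quoted above to replace ``Ish-parking function'' by ``$\cG^n_n$-parking function'' (recall that the Ish-parking functions are the labels of the regions of $\Ish_n=\cA^n_n$), and then to match the defining condition of Definition~\ref{def-GPark} against a greedy construction of the centre. First I would record the structure of $\cG^n_n$: vertex $1$ has only the simple out-arcs $(1,j)$ for $2\le j\le n$, while each vertex $i\ge2$ has the simple out-arcs $(i,j)$ for $i<j\le n$ together with the arc $(i,1)$ of multiplicity $i-1$ (coming from the hyperplanes $x_1=x_j+a$, $1\le a<j$); in particular every vertex has out-degree $n-1$, and there are no $S_{ij}$-arcs.

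Next I would isolate the subsets that can witness failure of the parking condition. If $1\notin I$, then taking $i=\max I\ge2$ gives an out-degree toward $[n]\setminus I$ equal to $(n-i)+(i-1)=n-1\ge a_i-1$ (using $\ba\in[n]^n$), so the condition of Definition~\ref{def-GPark} holds automatically; hence only subsets containing $1$ can fail. For $1\in I$, writing $J=[n]\setminus I\subseteq\{2,\dotsc,n\}$, a short count shows the number of out-arcs from $i$ leaving $I$ equals $|\{j\in J:j>i\}|$ \emph{uniformly} for every $i\in I$ (for $i=1$ this is $|J|$, and for $i\ge2$ the multiple arc to $1$ is absorbed into $I$). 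Thus $\ba$ fails to be a $\cG^n_n$-parking function exactly when there is a set $I\ni1$ with $|\{j\in J:j>i\}|\le a_i-2$ for all $i\in I$; I will call such an $I$ \emph{bad}.

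The core is to identify ``a bad set exists'' with ``$1\notin Z(\ba)$''. For this I would first show that the centre is built greedily: scanning $i=n,n-1,\dotsc,1$ with a counter $c$, one adjoins $i$ precisely when $a_i\le c+1$. Validity is immediate, and maximality follows from the union property noted in the footnote to the centre definition (the centre is the union of all valid sets, so the greedy set lies inside it, and taking the largest hypothetical centre element missed by greedy contradicts $a_i\le\text{rank}$). Granting this, $1\in Z(\ba)$ iff $a_1\le|S|+1$, where $S$ is the greedy set on $\{2,\dotsc,n\}$. For ``$1\notin Z\Rightarrow$ bad set'' I would take $I=\{1\}\cup(\{2,\dotsc,n\}\setminus S)$, so $J=S$, and use that at each rejection the counter equals $|\{j\in S:j>i\}|$, yielding all bad inequalities (including $|J|=|S|\le a_1-2$ at $i=1$). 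For the converse I would run the greedy scan against a given bad $I$ and prove by downward induction that $S\subseteq J$: if $i\in I$ then the counter satisfies $c\le|\{j\in J:j>i\}|\le a_i-2<a_i-1$, so greedy rejects $i$; the bad inequality at $i=1$ then reads $a_1\ge|J|+2\ge|S|+2$, forcing greedy to reject $1$, i.e. $1\notin Z(\ba)$.

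The main obstacle I anticipate is the bookkeeping in this last correspondence: pinning down the greedy description of the centre and keeping the counter $c$ synchronised with $|\{j\in J:j>i\}|$ through the induction, so that the single inequality at vertex $1$ cleanly encodes $1\notin Z(\ba)$. Everything else---the graph structure, the automatic satisfaction when $1\notin I$, and the uniform out-degree formula---is routine once the multiplicities of the arcs $(j,1)$ are correctly accounted for; notably, this route does not seem to require the Parking Algorithm or Lemma~\ref{shift}.
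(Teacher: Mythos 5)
Your argument is correct, and it is worth noting at the outset that the paper itself gives no proof of this statement: Theorem~\ref{def.ipf} is quoted from \cite[Proposition 3.12]{DGO3} and closed with a \textup{\qedsymbol} immediately. So there is no in-paper proof to match; the closest analogue inside this paper is the general case $2\le k\le n$ treated in Lemmas~\ref{centro} and~\ref{Ishlabels}, which for $k=n$ recovers exactly this theorem but goes through the $\dfs$-Burning Algorithm applied to the augmented graph $\bG$, reading off the centre from the list $\bnv$ and using the Parking Algorithm machinery for the interval $[k,n]$ (which is vacuous when $k=n$). Your route is genuinely different and more elementary: you work directly from Definition~\ref{def-GPark} for $\cG^n_n$, observe that sets $I\not\ni 1$ never violate the condition because every vertex $i\ge2$ has total out-degree $n-1\ge a_i-1$, reduce the failure condition for $I\ni1$ to the uniform count $|\{j\in J: j>i\}|\le a_i-2$, and match this against a greedy top-down construction of $Z(\ba)$. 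I checked the two delicate points and both hold: the greedy set equals the centre (inclusion one way by the union property in the footnote, the other by taking the largest centre element the greedy scan would miss), and in the converse direction the downward induction keeping $S\subseteq J$ correctly synchronises the counter with $|\{j\in J: j>i\}|$. What your approach buys is a self-contained, algorithm-free proof of the Ish case from Mazin's theorem alone; what it does not give is the uniform treatment of all $k$, for which the paper's burning-algorithm framework (Lemma~\ref{Ishlabels}) is designed, since for $2\le k<n$ the arcs $S_{ij}$ reappear and the subsets $I\not\ni1$ are no longer automatically harmless.
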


\begin{proposition}\label{parkscentre}
Any function $\ba\in[n]^n$ parks all the elements of $Z(\ba)$. Moreover,
for every $\bb\in[n]^n$, if the restriction to $Z(\ba)$ of $\ba$ and $\bb$ are equal, then 
$\bb$ also parks all the elements of $Z(\ba)$.
\end{proposition}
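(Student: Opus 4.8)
The plan is to isolate a single combinatorial statement that yields both parts at once: \emph{if $i_1>\dots>i_m$ are elements of $[n]$ with $a_{i_j}\le j$ for every $j\in[m]$, then the Parking Algorithm applied to $\ba$ gives $\pp(i_j)\le n$ for every $j$}. Applying this to the decreasing enumeration $i_1>\dots>i_m$ of $Z(\ba)$, which by definition of the centre satisfies $a_{i_j}\le j$, proves that $\ba$ parks every element of $Z(\ba)$. The ``moreover'' clause then comes essentially for free: the hypothesis and the whole argument below involve $\ba$ only through the entries $a_{i_1},\dots,a_{i_m}$ indexed by $Z(\ba)$. So if $\bb$ agrees with $\ba$ on $Z(\ba)$, the same indices satisfy $b_{i_j}=a_{i_j}\le j$, and the identical reasoning, run on $\bb$, gives $\pp(i_j)\le n$ there as well. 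Thus no separate work is needed for the second sentence once the displayed statement is proved.

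Two elementary observations drive everything: since the inner loop only increases $p$ from its initial value $a_i$, each driver parks at a position at least as large as its preference; and processing a driver never frees a position, it only occupies a new one. With these in hand I would prove, by induction on $j\in[m]$, the assertion $P_j$: \emph{driver $i_j$ parks at a position in $[1,n]$, and immediately after it is processed the positions $1,\dots,j$ are all occupied}. Assuming $P_1,\dots,P_{j-1}$ (vacuously when $j=1$), positions $1,\dots,j-1$ are occupied once $i_{j-1}$ has been handled; the drivers processed next, namely those with index in $\{i_j+1,\dots,i_{j-1}-1\}$, lie outside $\{i_1,\dots,i_m\}$ and by the second observation can only enlarge the occupied set, so $1,\dots,j-1$ are still occupied when $i_j$ is reached.

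The crux, and the only step I expect to cause trouble, is the bound $\pp(i_j)\le n$. The naive estimate, that the upward search might pass over every occupied position above $a_{i_j}$, is too weak: it permits overflow past $n$ precisely when $i_j<j$, which can genuinely occur (e.g.\ when $i_m=1$). The resolution is a \emph{global} count exploiting the descending processing order. The drivers treated before $i_j$ are exactly those of index greater than $i_j$, numbering $n-i_j\le n-1$; they occupy at most $n-i_j$ positions, so at least $i_j\ge 1$ of the positions $1,\dots,n$ are still free when $i_j$ is processed. Since $1,\dots,j-1$ are already occupied, such a free position must lie in $[j,n]$; and because $a_{i_j}\le j$, the upward search starting at $a_{i_j}$ necessarily halts at a free position no larger than that one, giving $\pp(i_j)\le n$. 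Finally, if position $j$ itself was free it is the first free spot at or beyond $a_{i_j}$ (the block $a_{i_j},\dots,j-1$ being occupied), so $i_j$ lands there; otherwise $1,\dots,j$ were already occupied. Either way positions $1,\dots,j$ are occupied afterward, which closes the induction and proves $P_j$. Re-reading this argument with $b_{i_j}=a_{i_j}\le j$ in place of $a_{i_j}$ settles the ``moreover'' statement verbatim.
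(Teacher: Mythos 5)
Your proof is correct and follows essentially the same route as the paper's: an induction along the decreasing enumeration $i_1>\dots>i_m$ of $Z(\ba)$ showing that all positions below $a_{i_j}$ are already occupied when driver $i_j$ is processed, with the ``moreover'' clause obtained by observing that the argument uses $\ba$ only through its values on $Z(\ba)$. The only difference is one of detail: you make explicit the counting argument (at most $n-i_j\le n-1$ positions occupied, hence a free slot in $[j,n]$) that guarantees $\pp(i_j)\le n$, a step the paper compresses into ``it is immediate to see by induction.''
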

\begin{proof}
Let $Z(\ba)=\{i_1,\dotsc,i_{m}\}$ with $i_1>\dotsb>i_m$. We show that  if $a_{i_j}\leq j$ for every $j=1,\dotsc,m$ then $\ba$ parks all the elements of $Z(\ba)$. In fact, it is immediate to see by induction on $j$ that when $p$ is assigned $a_{i_j}$ in Line~3 of the Parking Algorithm then $\spp(i)\neq0$ for every $i<p$, and the same happens if we replace $\ba$ with $\bb$ as described above, since $Z(\bb)\supseteq Z(\ba)$. Hence,
$\ff(\ba)>p=a_{i_j}$ and $\ba$ parks $i_j$, and the same holds for $\bb$.
\end{proof}

\subsection{Partial parking functions}\label{ppf}

\noindent
Fixed integers $n\geq3$ and $1<k\leq n$, and $\ba=(a_1,\dotsc,a_n)\in\N^n$, 
consider $\pi\in\mathfrak{S}_n$ such that:
$$
\begin{cases}
\pi(i)=i,&\text{ for every $i< k$;}\\
a_{\pi(i)}\geq a_{\pi(i+1)},
&\text{for every $k\leq i < n$;}
\end{cases}$$
(note that if $k\leq i \leq n$ then also $k\leq\pi(i)\leq n$, since $\pi\in\mathfrak{S}_n$).
Finally, set
$$\bak:=\ba\circ\pi\,.$$

\begin{definition}\label{def37}
$\ba\in[n]^n$ is a \emph{$k$-partial parking function} if:
\begin{itemize}
\item $\ba$ parks all the elements of $[k,n]$;
\item $1\in Z(\bak)$.
\end{itemize}
\end{definition}

The restriction to $[k,n]$ of a function $\ba$ that  parks all the $n+1-k$ elements of $[k,n]$
is  a particular case of a \emph{defective parking function}  introduced by Cameron, Johannsen, Prellberg and Schweitzer \cite{CJPS}. Hence, the number $T_k$ of \emph{all} functions that park every element of $[k,n]$ is $n^{k-1}c(n,n+1-k,0)$, where $c(n,m,k)$ is the number of $(n,m,k)$-defective parking functions  \cite[pp.3]{CJPS}, that is
$$T_k=k\,n^{k-1}(n+1)^{n-k}\,.$$
\begin{lemma}\label{remrem}
A function \emph{$\ba\in[n]^n$ parks every element of $[k,n]$} if and only if
$$\big|\big\{j\in[k,n]\mid a_j\leq i\big\}\big| + k-1\geq i\,,\quad \forall i\in[k,n]\,.$$
\end{lemma}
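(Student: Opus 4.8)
The plan is to reduce the statement to the standard criterion for \emph{defective} parking of $m:=n+1-k$ cars in $n$ places, and then to prove that criterion directly from the Parking Algorithm. First I would exploit that the algorithm serves the drivers in \emph{descending} order: every driver in $[k,n]$ is served before any driver in $[1,k-1]$, so at the moment a driver $j\in[k,n]$ is processed the array $\spp$ records only cars indexed in $[k,n]$. Consequently $\pp(j)$---and in particular whether $\pp(j)\le n$---depends only on $(a_j)_{j\in[k,n]}$, and $\ba$ parks every element of $[k,n]$ if and only if the $m$ cars with these preferences all park in the $n$-place street. By Lemma~\ref{shift} the order among these $m$ cars does not affect $\op$, so ``all park'' is the same as $|\op|=m$; note that then exactly $n-m=k-1$ places stay free.

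For the direction ``$\Rightarrow$'', I would fix $i\in[k,n]$ and assume all $m$ cars park. Any car occupying a place $\le i$ necessarily had preference $\le i$, and among the places $\{1,\dots,i\}$ at most $k-1$ can be empty; hence at least $i-(k-1)$ of them are filled, each by a distinct driver of $[k,n]$ with preference $\le i$. This gives $\big|\{j\in[k,n]\mid a_j\le i\}\big|\ge i-(k-1)$, which is the asserted inequality (an equality when $i=n$).

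For ``$\Leftarrow$'' I would argue by contraposition. If some driver of $[k,n]$ fails to park, let $p^{*}$ be the smallest place such that $[p^{*},n]$ is entirely occupied while place $p^{*}-1$ is free (setting $p^{*}=1$ if the whole street fills). The key step---and the part I expect to be the main obstacle---is to show that every car sitting in $[p^{*},n]$ actually has preference $\ge p^{*}$: a car with preference $<p^{*}$ would meet the free place $p^{*}-1$ while searching and would park there or earlier, contradicting that occupied places stay occupied. The failing car also has preference $\ge p^{*}$, so there are at least $(n-p^{*}+1)+1$ cars with preference $\ge p^{*}$; equivalently $\big|\{j\in[k,n]\mid a_j\le p^{*}-1\}\big|<(p^{*}-1)-(k-1)$. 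A short count with $m=n+1-k$ forces $p^{*}-1\in[k,n]$, so this contradicts the hypothesis. The remaining care is purely bookkeeping---checking that the threshold $p^{*}-1$ lands in the asserted range $[k,n]$---while the descending-order reduction and the forward count are routine.
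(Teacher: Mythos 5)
Your argument is correct, but it takes a genuinely different route from the paper's. The paper's proof is a two-line reduction: since the property of parking every element of $[k,n]$ does not depend on $a_1,\dotsc,a_{k-1}$ (the drivers of $[k,n]$ are served first), one may replace those coordinates by $1$; the padded function then parks everyone exactly when it parks $[k,n]$ (the last $k-1$ drivers, all preferring slot $1$, fill the $k-1$ remaining free places), and the Konheim--Weiss characterisation of parking functions (Definition~\ref{def31}) applied to the padded function yields the stated inequality after discarding the trivially satisfied cases $i<k$. You use the same initial observation about descending order, but instead of invoking Konheim--Weiss as a black box you reprove the counting criterion directly for the defective process: the forward count via the at most $k-1$ empty slots among $\{1,\dotsc,i\}$, and the contrapositive via the maximal occupied suffix $[p^{*},n]$ whose occupants (and the failing driver) all have preference at least $p^{*}$, with the cardinality count forcing $p^{*}-1\in[k,n]$. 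Both of these steps check out, including the range verification $k+1\le p^{*}\le n$. What the paper's approach buys is brevity, by leaning on an already-stated classical result; what yours buys is self-containedness and an explicit identification of where the inequality fails when a driver is left out, at the cost of essentially redoing the Konheim--Weiss argument in the defective setting. One small tightening: your appeal to Lemma~\ref{shift} is not needed (that all $m$ cars park is by definition the statement $|\op|=m$ for the restricted instance), and you should state explicitly that $p^{*}$ is taken with respect to the final configuration, which is legitimate because places, once occupied, never become free.
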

\begin{proof}
In fact, since  this property does not depend on the first $k-1$ coordinates of $\ba$
we may replace each one of them by $1$. Now, the new function parks every element of $[k,n]$ if and only if it is a parking function.
\end{proof}

\begin{proposition}\label{def2.ppf}
A function $\ba=(a_1,\dotsc,a_n)\in[n]^n$ is a $k$-partial parking function if and only if
there is a permutation $\sigma\in\mathfrak{S}_n$ with 
\begin{equation*}
\begin{cases}
a_{\sigma(i)}\leq i \text{ for every } i\in[a_1]\text{ and for every }
i\in[k,n]\text{ such that }\sigma(i)\geq k\,;\\
\sigma(i+1)< \sigma(i) \text{ for every } i\in[a_1-1]\text{ such that }\sigma(i)<k\,.\end{cases}
\end{equation*}
\end{proposition}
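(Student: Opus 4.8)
The plan is to read the two defining clauses of a $k$-partial parking function in Definition~\ref{def37} off the two lines of the stated condition on $\sigma$. Write property~(A) for ``$\ba$ parks every element of $[k,n]$'' and property~(B) for ``$1\in Z(\bak)$''. The bridge between $\sigma$ and $\bak=\ba\circ\pi$ is that $a_{\sigma(i)}=(\bak)_{\pi^{-1}(\sigma(i))}$, together with the fact that $\pi$ fixes $[1,k-1]$ pointwise and maps $[k,n]$ onto itself; hence $\sigma(i)<k$ is equivalent to $\pi^{-1}(\sigma(i))<k$, in which case $\pi^{-1}(\sigma(i))=\sigma(i)$. This compatibility of $\pi$ with the cutoff $k$ is what makes the sorting hidden in $\bak$ interact cleanly with the predicate ``$\sigma(i)<k$'' occurring in the second line. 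I would treat the first line on $[a_1]$ together with the second line as encoding~(B), and the first line on the large indices as encoding~(A).

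For (B) I would apply Theorem~\ref{def.ipf} to $\bak$: since $(\bak)_1=a_1$, the condition $1\in Z(\bak)$ is equivalent to the existence of a strictly decreasing chain $j_1>\dots>j_{a_1}$ with $(\bak)_{j_\ell}\le\ell$ for all $\ell\in[a_1]$. Given such a chain, I set $\sigma(\ell):=\pi(j_\ell)$ on $[a_1]$; then $a_{\sigma(\ell)}=(\bak)_{j_\ell}\le\ell$ is the first line there, and if $\sigma(\ell)<k$ then $j_\ell<k$, so $\pi$ fixes $j_\ell$ and $j_{\ell+1}$ and the chain gives $\sigma(\ell+1)=j_{\ell+1}<j_\ell=\sigma(\ell)$, which is the second line. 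Conversely, from a $\sigma$ obeying the first line on $[a_1]$ and the second line, I recover a chain by sorting $\{\pi^{-1}(\sigma(\ell)):\ell\in[a_1]\}$ decreasingly. Here the second line forces the small indices to occupy a suffix of $[a_1]$ in decreasing order, so re-sorting only rearranges the large indices; and because $\bak$ is nonincreasing on $[k,n]$ by construction, sorting those indices decreasingly sorts their $\bak$-values increasingly, so the inequalities $(\bak)_{(\ell)}\le\ell$ survive by the elementary fact that $b_\ell\le\ell$ for all $\ell$ implies the same after increasing rearrangement. This yields the chain, hence~(B).

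For (A) I would invoke Lemma~\ref{remrem}: $\ba$ parks $[k,n]$ if and only if $\big|\{j\in[k,n]\mid a_j\le i\}\big|+k-1\ge i$ for every $i\in[k,n]$. This Hall-type inequality is exactly the solvability of the defective parking problem for the large indices, i.e.\ the existence of an injection sending each large index $v\in[k,n]$ to a position at least $a_v$; the first line supplies precisely such data for the large indices seated in $[k,n]$, via $a_{\sigma(i)}\le i$ for $i\in[k,n]$ with $\sigma(i)\ge k$, the ``only if'' direction being the parking argument already written out after Definition~\ref{def31}. The small indices are then free to fill whatever positions remain.

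The crux is the \emph{reconciliation}: exhibiting a single $\sigma$ that simultaneously realises the centre chain of $\bak$ on the prefix $[a_1]$ and a valid parking assignment of all the large indices. In the necessity direction this amounts to extending the partially defined $\sigma(\ell)=\pi(j_\ell)$ ($\ell\in[a_1]$) to a full permutation without ever seating a large index below its value; the chain already seats its own large indices validly (position $\ell\ge a_{\sigma(\ell)}$), so the task reduces to the statement that a partial valid parking assignment extends to a full one whenever the inequality of Lemma~\ref{remrem} holds. I expect this to be the delicate step, and I would prove it by the greedy order of the Parking Algorithm—using Proposition~\ref{parkscentre} to guarantee the centre indices park, then inserting the remaining large indices in increasing order of preferred slot—and by checking carefully that \emph{every} large index is genuinely constrained by the parking inequality, not only those seated at positions $\ge k$, so that the two readings of the first line cannot conflict. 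Verifying that the bound of Lemma~\ref{remrem} still leaves room after the centre chain has been fixed is where I anticipate the main work.
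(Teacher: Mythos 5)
Your handling of the centre condition $1\in Z(\bak)$ is correct in both directions, and your sorting/rearrangement argument actually supplies the justification that the paper compresses into ``$\sigma([a_1])\cup\{1\}\subseteq Z$ by maximality of $Z$''. Your necessity argument for the parking of $[k,n]$ also tracks the paper, which realises the seating explicitly: it lists the centre decreasingly on an initial segment, the remaining small indices next, and the remaining large indices on the last positions in the order that makes their $\bak$-values increasing, then verifies $a_{\sigma(i)}\leq i$ there by contradiction with Lemma~\ref{remrem}. Up to this point your route and the paper's are essentially the same.

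The genuine gap is exactly the step you deferred to ``the main work'': the sufficiency of the displayed condition for ``$\ba$ parks $[k,n]$''. As literally stated, the first clause constrains only positions $i\in[a_1]$ and those positions $i\in[k,n]$ at which $\sigma$ seats an index $\geq k$; a large index seated at a position in $(a_1,k-1]$, and a small index seated at a position of $[k,n]$, both escape every constraint, so the existence of $\sigma$ does not imply the inequality of Lemma~\ref{remrem}. Concretely, take $n=4$, $k=3$, $\ba=(1,4,4,4)$: this does not park $3$ (drivers $3$ and $4$ both prefer place $4$), yet $\sigma$ with $\sigma(1)=1$, $\sigma(2)=4$, $\sigma(3)=2$, $\sigma(4)=3$ satisfies both clauses, since $[a_1]=\{1\}$ makes the second clause vacuous and the only $i\in[3,4]$ with $\sigma(i)\geq3$ is $i=4$, where $a_{\sigma(4)}=a_3=4\leq4$. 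So the reconciliation you postponed cannot be carried out for an arbitrary $\sigma$ meeting the stated clauses: one needs the stronger hypothesis that \emph{every} position seating an index $\geq k$ (not only positions in $[k,n]$) satisfies $a_{\sigma(i)}\leq i$ --- equivalently, that all $n-k+1$ large indices are seated at positions at least equal to their values. That stronger property holds for the paper's canonical $\sigma=\pi\circ\tau$ and is what its converse silently invokes when it passes to ``$a_{\sigma(i)}\leq i$ for every $i\in[k,n]$''; your instinct that this is the delicate point is right, but the proposal does not close it, and with the condition read literally it cannot be closed.
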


\begin{proof}
Let $\ta_i=a_{\pi(i)}$ be the $i$th component of $\bak$ ($1\leq i\leq n$)
and $Z=Z(\bak)$. We suppose that, as in Definition~\ref{def.ipf},
$Z=\{ \alpha_1, \ldots, \alpha_z \}$  with $n\geq \alpha_1 > \cdots > \alpha_z\geq1$  and $\ta_{\alpha_i} \leq i$ for every 
$i \in [z]$. 
 Let $B=[k-1]\setminus Z=\{\beta_1,\dotsc,\beta_m\}$ and $C=[k,n]\setminus Z=\{\gammab{1},\dotsc,\gammab{\ell}\}$ with  $\beta_1<\dotsb<\beta_m$ and
 $\gammab{1}<\dotsb<\gammab{\ell}$.
Note that, in particular, $a_1\leq z$, $z+m+\ell=n$ and 
$\ta_{\gammad{1}}\geq\dotsb\geq\ta_{\gammad{\ell}}$.

Now, suppose that $\ba$ is a $k$-partial parking function as defined in Definition~\ref{def37}.
We define $\tau\in\mathfrak{S}_n$ by
$$\tau(t)=\begin{cases}
\alpha_t,& \text{ if $t\leq z$;}\\
\beta_{t-z},& \text{ if $z<t\leq n-\ell$;}\\
\gammab{n+1-t},& \text{ if $n-\ell<t\leq n$.}\\
\end{cases}$$
so that $\tau(1)>\dotsb>\tau(z)$  and $\ta_{\tau(n-\ell+1)}\leq\dotsb\leq\ta_{\tau(n)}$. 
Finally, we define $\sigma=\pi\circ\tau$.

Then, for every $i\in[a_1]\subseteq[z]$, $a_{\sigma(i)}=\ta_{\tau(i)}\leq i$   and, 
for every $i\in[a_1-1]$ such that $\tau(i)<k$,
$\tau(i+1)< \tau(i)=\sigma(i)$. But then $\tau(i)<k$ implies that $\tau(i+1)<k$ and thus $\sigma(i+1)=\tau(i+1)<\sigma(i)$.
Now, suppose that $i<\ta_{\tau(i)}$ for some $i\in[k,n]$  such that $\tau(i)\geq k$. Then $i\in C$.
Since   $\ta_{\tau(i)}\leq\ta_{\tau(j)}$ for every $i<j\leq n$ (being, in particular, also $j\in C$),
\begin{align*}
&\big|\big\{j\in[k,n]\mid \ta_j> i\big\}\big|>n-i\,,\\
\shortintertext{and thus, contrary to the fact that $\ba$ parks all the elements of $[k,n]$
(cf. Lemma~\ref{remrem}),}
&\big|\big\{j\in[k,n]\mid a_j\leq i\big\}\big|+k-1 < i\,.
\end{align*}
For example, suppose that $n=8$, $k=5$, and $\ba=2663\,1461$. Then $\bak=2663\,6411$,
$\tau=87412365$, $\sigma=85412367$ and $\bak\circ\tau=\ba\circ\sigma=1132\,6646$.

Conversely, suppose that $a_{\sigma(i)}\leq i$ for every $i\in[k,n]$  such that $\sigma(i)\geq k$. By definition of $\tau$, if $i\in[k,n]$ then
$i\in Z$ or $\tau(i)>z+m\geq k-1$. Therefore, $a_{\sigma(i)}=\ta_{\tau(i)}\leq i$ for every $i\in[k,n]$ and hence
$$\big|\big\{\ell\in[k,n]\mid a_{\ell}\leq j\big\}\big| +k-1\geq j\,$$
Finally, $\sigma([a_1])\cup\{1\}\subseteq Z$ by maximality of $Z$. 
\end{proof}

Indeed, $k$-partial parking functions are exactly the $\cG^k_n$-parking functions.
But to prove it we still need a different tool.

\section{The $\dfs$-Burning Algorithm}
We want to characterise the $\cG_n^k$-parking functions for every $k,n\in\N$ such that $2\leq k\leq n$.
Similarly to what we did for the characterisation of the Ish-parking functions \cite{DGO3} (the case $k=n$), our main tool is the $\dfs$-Burning Algorithm of  Perkinson, Yang and Yu \cite{PYY} (cf. Figure~\ref{algs}). Recall that this algorithm, given $\ba\in[n]^n$ and a multiple digraph $\cG$, determines whether $\ba$ is a $\cG$-parking function by constructing in the positive case an oriented spanning subtree $T$ of $\cG$ that is in bijection with $\ba$  \cite{PYY,DGO3}. The Tree to Parking Function Algorithm (cf. Figure~\ref{algs}, on the right) builds $\ba$ out of $T$ (and $\cG$), thus defining the inverse bijection.

Recall \cite{DGO3} that the algorithm is not directly applied to the multidigraph $\cG$. Indeed, it is applied to another digraph, $\bG$, with one more vertex, $0$, and set of arcs $\oA$ defined by:
\begin{itemize}
\item For every vertex $v\in[n]$, $(0,v)\in\oA$;
\item For every arc $(v,w)\in A$, $(w,v)\in\oA$.
\end{itemize}

We use the following result, which is an extension to directed multigraphs of the work of Perkinson, Yang and Yu \cite{PYY}.
\begin{proposition}[{\cite[Proposition 3.2]{DGO3}}]\label{propant}
Given a directed multigraph $\cG$ on $[n]$ and a function $\ba \colon [n] \to \N_0$,  $\ba$ is a $\cG$-parking function if and only if  the list $\bnv$ at the end of the execution of the $\dfs$-Burning Algorithm applied to
$\bG$ includes all the vertices in $\oV=\{0\}\cup[n]$.
\end{proposition}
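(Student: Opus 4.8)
The plan is to track the set of burnt vertices and to show that, regardless of the depth-first order in which the $\dfs$-Burning Algorithm proceeds on $\bG$, this set stabilises at a unique maximal ``closed'' set, whose complement in $[n]$ is precisely a subset witnessing the failure of the $\cG$-parking condition of Definition~\ref{def-GPark}. First I would fix the burning rule in the form it takes on $\bG$: vertex $0$ is burnt at the outset, and a vertex $v\in[n]$ becomes \emph{burnable}, relative to a current set $S$ of burnt vertices with $0\in S$, exactly when the number of arcs $(w,v)\in\oA$ with $w\in S$, counted with multiplicity, reaches $a_v$. Since $(0,v)\in\oA$ for every $v$, and the remaining in-arcs of $v$ in $\bG$ are exactly the arcs $(w,v)\in\oA$ arising from arcs $(v,w)\in A$ of $\cG$, the number of burnt in-neighbours of $v$ equals
$$ 1 + \big|\big\{(v,w)\in A \mid w\in S\big\}\big|\,. $$
Hence $v$ is burnable relative to $S$ if and only if $\big|\{(v,w)\in A\mid w\notin[n]\setminus S\}\big|\geq a_v-1$, the shift by one absorbing the arc from $0$; this is what will ultimately produce the bound $a_v-2$ in the definition.

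The key structural fact is \emph{monotonicity}: if $S\subseteq S'$ and $v$ is burnable relative to $S$, then $v$ is burnable relative to $S'$, since enlarging the burnt set can only increase the count of burnt in-neighbours. Call a set $S\ni 0$ \emph{closed} if it contains every vertex burnable relative to $S$. Monotonicity makes the family of closed sets closed under intersection, so there is a unique minimal closed set $S^{\ast}\ni 0$, obtained by greedily burning burnable vertices in any order: starting from $\{0\}\subseteq S^{\ast}$, every newly burnt vertex stays inside $S^{\ast}$ (a burnable vertex relative to a subset of $S^{\ast}$ is burnable relative to $S^{\ast}$, hence lies in the closed set $S^{\ast}$), while the terminal set is itself closed and therefore contains $S^{\ast}$. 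I would then identify the list $\bnv$ returned by the algorithm with $S^{\ast}$: every vertex the $\dfs$-Burning Algorithm burns is burnable at the instant it burns, hence lies in $S^{\ast}$; conversely, were some vertex of $S^{\ast}$ left unburnt at termination, monotonicity would keep it burnable throughout, so the depth-first search could not have terminated.

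Finally I would read off the characterisation from the complement. Put $U=\oV\setminus\bnv=[n]\setminus S^{\ast}$, which lies in $[n]$ because $0$ is always burnt. Closedness of $S^{\ast}$ means no $v\in U$ is burnable relative to $\{0\}\cup([n]\setminus U)$, that is, for every $v\in U$,
$$ \big|\big\{(v,w)\in A \mid w\notin U\big\}\big| \leq a_v-2\,. $$
This says exactly that a nonempty $U$ is a subset violating the condition of Definition~\ref{def-GPark}. Conversely, if some nonempty $I\subseteq[n]$ violates that condition, then $\{0\}\cup([n]\setminus I)$ is closed and therefore contains $S^{\ast}$, forcing $I\subseteq U$. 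Thus $U=\varnothing$, equivalently $\bnv$ includes every vertex of $\oV$, if and only if no nonempty subset of $[n]$ violates the $\cG$-parking condition, which is to say $\ba$ is a $\cG$-parking function.

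The routine parts are the two arithmetic translations between ``number of burnt in-neighbours in $\bG$'' and ``number of arcs of $\cG$ leaving $[n]\setminus S$''. The delicate point, and the one I expect to require the most care, is the confluence step: one must verify that the depth-first, tree-building bookkeeping of the algorithm terminates exactly at $S^{\ast}$ and does not stop prematurely while a burnable vertex remains. This is precisely where monotonicity of the burning rule is indispensable, and it is also what guarantees that the spanning subtree recorded along the way depends only on $\ba$ and $\cG$, not on the search order.
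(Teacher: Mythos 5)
You should first be aware that the paper contains no proof of Proposition~\ref{propant} to compare against: it is imported verbatim from \cite[Proposition~3.2]{DGO3}, itself an adaptation of Perkinson--Yang--Yu \cite{PYY} to directed multigraphs, so your argument stands or falls on its own. Its architecture --- translate the burning rule on $\bG$ into ``$v$ is burnable relative to $S$ iff $1+\big|\{(v,w)\in A\mid w\in S\}\big|\geq a_v$'', observe monotonicity in $S$, intersect closed sets to obtain a minimal closed set $S^{\ast}\ni 0$, identify $\bnv$ with $S^{\ast}$, and read the condition of Definition~\ref{def-GPark} off the complement $U=[n]\setminus S^{\ast}$ --- is sound, and the two arithmetic translations as well as the final equivalence (a nonempty $U$ is itself a violating set; any violating set $I$ satisfies $I\subseteq U$) are carried out correctly.

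The genuine gap is the step you yourself flag as ``delicate'' and then do not carry out: the inclusion $S^{\ast}\subseteq\bnv$. Your stated reason --- ``were some vertex of $S^{\ast}$ left unburnt, monotonicity would keep it burnable throughout, so the search could not have terminated'' --- has two holes. First, a given vertex of $S^{\ast}\setminus\bnv$ need not be burnable relative to $\bnv$; the correct deduction is by contraposition: if $S^{\ast}\not\subseteq\bnv$, then $\bnv$ is not closed, so \emph{some} unburnt vertex is burnable relative to $\bnv$. Second, and this is the real content, one must show that this is incompatible with termination, i.e.\ that the terminal burnt set of this particular depth-first procedure is closed; monotonicity alone does not give this (contrary to your closing remark that it is ``precisely where monotonicity is indispensable'') --- an accounting of the algorithm does. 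Namely: every arc of $\oA$ leaving a burnt vertex $i$ is scanned exactly once, during the single call {\sc dfs\_from}$(i)$; a scan of an arc into a still-unburnt $v$ either burns $v$ or decrements $a_v$, and $a_v$ never drops below $1$; hence at termination an unburnt $v$ has received at most $a_v-1$ arcs from $\bnv$, counted with multiplicity, so no unburnt vertex is burnable relative to $\bnv$. With that lemma your proof closes. Separately, delete the claim that confluence ``guarantees that the spanning subtree recorded along the way depends only on $\ba$ and $\cG$, not on the search order'': it is false --- only the burnt \emph{set} is order-independent, while \tree{} genuinely depends on the ordering of the lists $\cN(i)$, which is exactly why the paper fixes those orderings before Lemma~\ref{centro}. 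The slip does not affect the present statement, but it would mislead anyone using your write-up for the bijective results.
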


The different arcs connecting $v$ and $w$ that occur $\ell$ times ($\ell>1$) are labelled $(w,v+m\,n)\in\oA$ with $m\in[0,\ell-1]$, so as to distinguish between them.
For this purpose, the $\dfs$-Burning Algorithm inputs the list $\cN(w)$   of vertices $v$ such that $(w,v)\in\bG$  for each vertex $w$  under the same form, that is, under the form 
$v+m\,n$ with $m\in[0,\ell-1]$. However, note that every vertex is seen by the algorithm as a unique entity. In fact, in Line~7 we take $j_n=\mathrm{Mod}(j,n)$ for every $j\in\cN(i)$ (in Line~6).

Note that although the order of the vertices in $\cN$ is not relevant in the context of Proposition~\ref{propant},  it is indeed relevant in other contexts, like that of Lemma~\ref{centro}  (cf. \cite[Remark 3.4.]{DGO3}).
 We define the order in $\bGk$ so that:
 \begin{enumerate}
 \item $\cN(0)$ if formed by the arcs of form $(0,i)$ for every $i\in[n]$; we sort $\cN(0)$ based on the value of $i$, in descending order.
 \item There is an arc of form  $(1,i+m\,n)$ for every $i>1$ and every $0\leq m\leq\min\{i,k\}-2$. We sort $\cN(1)$ by the value of $i$ in descending order, breaking ties by the value of $m$, again in descending order. For example, in
 $\cA^3_4$ (cf. Figure~\ref{examplex}),
 $$\cN(1)=\langle 8,4,7,3,2\rangle\,.$$
 \item For every $1\leq m<i$ there is a unique arc $(i,m)$. \emph{Exactly when $i\geq k$}, there is also an arc of form $(i,m)$ for every $i<m\leq n$. In all cases, we sort $\cN(i)$ by the value of $m$ in descending order
\end{enumerate} 
 
\begin{figure}[ht]
\begin{minipage}[t]{.45\textwidth}
{\footnotesize
\algrenewcommand\algorithmicindent{1.15em}%
\begin{algorithmic}[1]
  \Statex{\hspace{-0.5cm}\sc $\dfs$-Burning Algorithm (ad.)}
  \Statex{\bf Input:}\  $\ba\colon[n]\to\N$
  \State $\bnv =\{0\}$
  \State $\damp = \{\,\}$
  \State $\tree= \{\,\}$
  \State execute {\sc dfs\_from}($0$)
  \Statex{\bf Output:}\  {$\bnv$, $\tree$ and $\damp$}
  \Statex{\hspace{-0.5cm}\sc auxiliary function}
    \Function{\sc dfs\_from}{$i$}
      \ForAll{$j$ in $\cN(i)$}
      \State $j_n=\mathrm{Mod}(j,n)$
	\If{$j_n\notin\bnv$}
	  \If{$a_{j_n}=1$}
	    \State append $(i,j)$ to $\tree$
	    \State append $j_n$ to $\bnv$
	    \State  execute {\sc dfs\_from}$(j_n)$
	  \Else
	     \State append $(i,j)$ to $\damp$
	     \State $a_{j_n} = a_{j_n}-1$
	  \EndIf
       \EndIf	
     \EndFor
   \EndFunction
  \end{algorithmic}
}
\end{minipage}\hfill\vrule\hfill
\begin{minipage}[t]{.5\textwidth}
{\footnotesize
\algrenewcommand\algorithmicindent{1.15em}%
\begin{algorithmic}[1]
  \Statex{\hspace{-0.5cm}\sc Tree to Parking Function Algorithm (ad.)}
  \Statex{\bf Input:}\  Spanning tree $T$ rooted\\at $r$ with edges directed away
  from root.
  \State $\bnv =\{r\}$
  \State $\damp = \{\,\}$
  \State $\ba=(1,\dotsc,1)$
  \State execute {\sc tree\_from$(r)$}
  \Statex{\bf Output:}\  {$\ba\colon V\setminus\{r\}\to\N$}
  \Statex{\hspace{-0.5cm}\sc auxiliary function}
  \Function{\sc tree\_from}{$i$}
      \ForAll{$j$ in $\cN(i)$}
      \State $j_n=\mathrm{Mod}(j,n)$
      \If{$j_n\notin\bnv$}
	\If{$(i,j)$ is an edge of $T$}
	    \State append $j_n$ to $\bnv$
	    \State execute {\sc tree\_from}$(j_n)$
	 \Else
	   \State {$a_{j_n} = a_{j_n}+1$}
	   \State append $(i,j)$ to $\damp$
	 \EndIf
      \EndIf
    \EndFor
  \EndFunction
  \Statex
  \end{algorithmic}
}\end{minipage}
\caption{$\dfs$-Burning Algorithm and inverse}
\label{algs}
\end{figure}

\begin{example}
We apply the $\dfs$-Burning Algorithm  to $\ba=4213\in[4]^4$ with the three different graphs associated with $n=4$.
Actually, $\ba$ is a parking function ---that is, a label of a region of $\cA^2_4=\Shi_4$--- since $\bat{2}=4321$ and $1\in Z(\bat{2})=[4]$,
but  neither a label of a region of $\cA_4^3$ nor of $\cA_4^4=\Ish_4$, because $\bat{3}=4231$,
$\bat{4}=\ba=4213$,
$1\notin Z(\bat{3})=\{2,4\}$, and $1\notin Z(\bat{4})=\{2,3\}$.

In the first case, where
$\cN=${\small$\big\langle \langle 4,3,2,1\rangle,\langle4,3,2\rangle,\langle4,3,1\rangle,
\langle4,2,1\rangle,\langle3,2,1\rangle\big\rangle$} (cf. the left table in the bottom of Figure~\ref{examplex}),
when the algorithm is applied with  $\cG=\overline{\cG_4^2}$ to $\ba$,  it calls $\text{\sc dfs\_from}(i)$ with $i=0$, assigns $j=4$ and then, since
$a_j\neq1$, $(0,4)$ is joined to $\damp$. This is represented on the left-hand table below with the inclusion of $0_1$ in the top box of column $4$. 
Next assignment, $j=3$. Since now  $a_3=1$, $(0,3)$ is joined to $\tree$ and {\sc dfs\_from} is called with $i=3$. Then, $0_2$ is written in the only box of column $3$.  
At the end, $\bnv=${\small$\langle0,3,2,4,1\rangle$}, which proves that $4213$ is a $\cG^2_4$-parking function, that is, a standard parking function in dimension $4$. The respective spanning tree may be defined by the collection of arcs,
$\tree=${\small$\langle(0,3),(0,2),(2,4),(0,1)\rangle$}.

\begin{figure}[ht]
{\footnotesize
$$\begin{array}{c|c|c}
\strut\hspace{.35cm}
\begin{array}{|l||l||l|l|l|l|}
\hline
i&0&1&2&3&4\\
\hline
&\vbox to.375cm{\vfill}4&4&4&4&-\\
&\vbox to.375cm{\vfill}3&3&3&-&3\\
&\vbox to.375cm{\vfill}2&2&-&2&2\\
\rotatebox{90}{\rlap{\tiny$\cN(i)$}}&\vbox to.375cm{\vfill}1&-&1&1&1\\
\hline
\end{array}\hspace{.35cm}
&\strut\hspace{.35cm}
\begin{array}{|l||l||l|l|l|l|}
\hline
i&0&1&2&3&4\\
\hline
&\vbox to.375cm{\vfill}4&8\,4&-&4&-\\
&\vbox to.375cm{\vfill}3&7\,3&-&-&3\\
&\vbox to.375cm{\vfill}2&2&-&2&2\\
\rotatebox{90}{\rlap{\tiny$\cN(i)$}}&\vbox to.375cm{\vfill}1&-&1&1&1\\
\hline
\end{array}\hspace{.35cm}
&\strut\hspace{.35cm}
\begin{array}{|l||l||l|l|l|l|}
\hline
i&0&1&2&3&4\\
\hline
&\vbox to.375cm{\vfill}4&12\ 8\,4&-&-&-\\
&\vbox to.375cm{\vfill}3&7\,3&-&-&3\\
&\vbox to.375cm{\vfill}2&2&-&2&2\\
\rotatebox{90}{\rlap{\tiny$\cN(i)$}}&\vbox to.375cm{\vfill}1&-&1&1&1\\
\hline
\end{array}\hspace{.35cm}\strut\\[40pt]
\begin{array}{cccc}
\cline{1-1}
\cx{3_5}&&&\\
\cline{1-1}\cline{4-4}
\cx{4_8}&&&\cx{0_1}\\
\cline{1-2}\cline{4-4}
\cx{2_9}&\cx{3_4}&&\cx{3_3}\\
\hline
\hline
\cx{\!0_{10}\!}&\cx{0_6}&\cx{0_2}&\cx{2_7}\\
\hline
{\scriptstyle1}&{\scriptstyle2}&{\scriptstyle3}&{\scriptstyle4}
\end{array} 
&\begin{array}{cccc}
\cline{1-1}
\cx{3_5}&&&\\
\cline{1-1}\cline{4-4}
\cx{2_7}&&&\cx{0_1}\\
\cline{1-2}\cline{4-4}
\cx{0_8}&\cx{3_4}&&\cx{3_3}\\
\hline
\hline
\cx{}&\cx{0_6}&\cx{0_2}&\cx{}\\
\hline
{\scriptstyle1}&{\scriptstyle2}&{\scriptstyle3}&{\scriptstyle4}
\end{array} 
&\begin{array}{cccc}
\cline{1-1}
\cx{3_4}&&&\\
\cline{1-1}\cline{4-4}
\cx{2_6}&&&\cx{0_1}\\
\cline{1-2}\cline{4-4}
\cx{0_7}&\cx{3_3}&&\cx{}\\
\hline
\hline
\cx{}&\cx{0_5}&\cx{0_2}&\cx{}\\
\hline
{\scriptstyle1}&{\scriptstyle2}&{\scriptstyle3}&{\scriptstyle4}
\end{array}\\[30pt]
\overline{\cG_4^2}=\overline{K_4}&\overline{\cG^3_4}&\overline{\cG_4^4}
\end{array}$$}
\caption{Lists of neighbours and execution of the $\dfs$-Burning Algorithm}
\label{examplex}
\end{figure}

We believe that now the content of the tables is self-explanatory. Just note that the entry $i_k$ in column $j$ means that arc $(i,j)$ is the $k$.th arc to be inserted \footnote{Perhaps with label $(i,j+m\,n)$.}.
Note also that the elements $i\in[n]$ of the bottom row are those for which $a_i=1$, and thus represent elements from $\tree$, whereas the remaining entries represent elements from $\damp$.

Finally, note that the algorithm  runs in the second graph by choosing the same arcs up to the seventh arc, which is not $(2,4)$ since $4\notin\cN(2)$ in this graph. Since $\bnv\neq\{0,1,\dotsc,4\}$ at the end of the execution for the two last graphs, we verify that $4213$ is neither a label of the regions of $\cA^3_4$ nor an Ish-parking function (in fact, $1\notin Z(4213)=\{2,3\}$).
\end{example}

\begin{lemma}\label{centro}
Let $\ba\in[n]^n$ be the input of the $\dfs$-Burning Algorithm
applied to $\bGk$ ($2\leq k\leq n$) as defined above, and suppose that, at the end of the execution, the list of burnt vertices is
 $\bnv=\langle 0\!\!=\!\!i_0,i_1,\dotsc,i_m\rangle$.
 Suppose $i_p:=\min\{i_1,\dotsc,i_m\}<k$. 
Then either $i_p=1$ or $p=m$. In any case, if $\bak=\ba\circ\pi$ for $\pi\in\mathfrak{S}_m$ defined as in the beginning of Section~\ref{ppf}
$$Z(\bak)=\{\pi(i_1),\dotsc,\pi(i_p)\}\,.$$
\end{lemma}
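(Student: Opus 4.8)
My plan rests on separating the two assertions and on the fact that the burnt \emph{set} is order-independent while the burnt \emph{order} is dictated by the decreasing neighbour lists. First I would record the exact shape of $\bGk$ as in-degree bookkeeping: $0$ sends one arc to every vertex; vertex $1$ sends $\min\{v,k\}-1$ arcs to each $v\ge 2$; a vertex $i\in[2,k-1]$ sends arcs only to $1,\dots,i-1$; and a vertex $i\in[k,n]$ sends one arc to every other vertex. Writing $d_S(v)$ for the number of arcs reaching $v$ from a set $S$ of already-burnt vertices, Proposition~\ref{propant} lets me describe $\{i_1,\dots,i_m\}$ as the least set containing $0$ that is closed under the rule ``adjoin $v$ once $d_S(v)\ge a_v$''. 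The two structural facts I will use repeatedly are: (a) a vertex of $[2,k-1]$ can spread fire only \emph{downward}, to strictly smaller vertices; and (b) vertex $1$ is the unique vertex of $[1,k-1]$ able to spread fire \emph{upward}.

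For the dichotomy, suppose $i_p=\min\{i_1,\dots,i_m\}<k$. If $i_p\ge 2$ then, $i_p$ being the least burnt vertex, $1\notin\{i_1,\dots,i_m\}$, so vertex $1$ never ignites and its upward arcs are never followed. The plan is then to show $p=m$, i.e.\ that $i_p$ is burnt \emph{last}: when $i_p$ is burnt, the recursive call at $i_p$ visits only vertices $<i_p$ by (a), none of which can burn without contradicting minimality, and I would argue that as the recursion unwinds no further vertex is ignited. Concretely I would attempt to establish the invariant that, at the instant $i_p$ is burnt, every element of $\{i_1,\dots,i_m\}\setminus\{i_p\}$ has already been burnt; combined with the fact that, once the fire has descended to the global minimum, facts (a) and the inertness of $1$ prevent the creation of any new fire, this yields $p=m$.

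For the centre identity I would match the greedy top-down construction of $Z(\bak)$ with the fire-spreading. Because $\pi$ lists $[k,n]$ in weakly decreasing $\ba$-order, a high vertex $v=\pi(i)$ with $i\in[k,n]$ satisfies $d_S(v)\ge a_v$ exactly when sufficiently many higher-ranked high vertices have already burnt, which is precisely the staircase inequality $\ta_i\le j$ (with $j$ the rank) that places $i$ in $Z(\bak)$; this should identify $\pi\bigl(\{i_1,\dots,i_p\}\cap[k,n]\bigr)$ with $Z(\bak)\cap[k,n]$. I would then fold in vertex $1$ and the vertices of $[2,k-1]$: the conditions ``$\ba$ parks $[k,n]$'' and ``$1\in Z(\bak)$'' are translated through Lemma~\ref{remrem} and Proposition~\ref{parkscentre}, while the vertices burnt \emph{after} $i_p$ (which can occur only in the case $i_p=1$, when the fire climbs back up through vertex $1$) are shown to lie outside $Z(\bak)$ by the maximality of the centre, so that exactly the initial segment $\{i_1,\dots,i_p\}$ survives.

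The main obstacle is the ordering assertion, for it is the one place where the order-independent closure of Proposition~\ref{propant} does not suffice and the actual recursion must be controlled. I expect the delicate point to be excluding the scenario in which, after the minimum $i_p$ is burnt, the recursion unwinds to an ancestor still holding an unexplored arc to some not-yet-burnt vertex $w$ with $i_p<w$, and ignites $w$; ruling this out is exactly what forces the full use of (a) and (b) together with the decreasing order of the neighbour lists, and it is the step on which I would spend the most care, since the whole coupling between the DFS order and the greedy order of the centre hinges on it.
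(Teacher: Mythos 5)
Your overall route is the same as the paper's: read off the arc structure of $\bGk$ (vertices of $[2,k-1]$ send arcs only downward, vertex $1$ is the only vertex below $k$ with upward arcs), deduce the dichotomy from that structure, and match the burn order against the greedy staircase defining $Z(\bak)$. But there are genuine gaps. The engine of the paper's argument is the quantitative bound $a_{i_j}\le j$ for every $j\in[m]$: when $i_j$ is appended to $\bnv$ its counter equals $1$, having been decremented at most once in each earlier call {\sc dfs\_from}$(i_\ell)$ with $i_j\in\cN(i_\ell)$, so the original value is at most $j$. You never isolate this inequality, and without it your ``staircase'' correspondence for the high vertices, and the inclusion $\{\pi(i_1),\dotsc,\pi(i_p)\}\subseteq Z(\bak)$, have nothing to run on (note also that vertex $1$ carries parallel arcs into each $v\ge 2$, so ``one decrement per burnt in-neighbour'' needs a word of justification there). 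Second, you propose to feed in the conditions ``$\ba$ parks $[k,n]$'' and ``$1\in Z(\bak)$'' via Lemma~\ref{remrem} and Proposition~\ref{parkscentre}; neither is a hypothesis of Lemma~\ref{centro}, which must hold for an arbitrary $\ba\in[n]^n$ --- those conditions are exactly what the subsequent Lemma~\ref{Ishlabels} establishes, so importing them here risks circularity. Third, ``shown to lie outside $Z(\bak)$ by the maximality of the centre'' is backwards: maximality only ever forces elements \emph{into} the centre; excluding the vertices burnt after step $p$, and the unburnt ones, needs a separate argument (the paper uses the closure property that if $j\in\bnv$, $m<j$ and $a_m\le a_j+1$, then $m\in\bnv$).

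On the dichotomy you have correctly located the crux --- controlling what happens when the recursion unwinds after the minimum $i_p\ge 2$ is burnt --- but the invariant you propose (``at the instant $i_p$ is burnt, every other element of $\{i_1,\dotsc,i_m\}$ has already been burnt'') is literally a restatement of $p=m$, so it cannot serve as the tool that proves it. What has to be shown, and what the paper asserts tersely via ``$i_{j+1}\in\cN(i_j)$'', is that the descending order of the neighbour lists forces every vertex burnt after a vertex of $[2,k-1]$ to be strictly smaller than it. This is precisely the ancestor-with-an-unexplored-upward-arc scenario you flag and then defer; deferring it defers the whole lemma. Be warned that the scenario is not vacuous: after the call at $i_p$ returns, an ancestor two or more levels up the stack can still hold unexplored arcs to vertices strictly between $i_p$ and the child it last finished, so ruling out ignition there requires an actual argument about the counters of those vertices, not merely about the ordering of the lists. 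Until that step is written down the proof is incomplete.
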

\begin{proof}
Note that:
\begin{itemize}
\item The value of $a_{i_j}$ is one when $i_j$ is appended to $\bnv$, at Line~11; it has decreased one unit in previous calls of   {\sc dfs\_from$(i)$}, exactly when $i=i_\ell$  and $i_j\in\cN(i_\ell)$ for some $\ell<j$. Hence, 
$$\forall\,j\in[m]\,,\quad a_{i_j}\leq j\,.$$
\item If $1<i_j<k$ then $i_{j+1}<i_j<k$, since $i_{j+1}\in \cN(i_j)$ and $i_j<k$.
\item If $i_{j+1},i_j\geq k$, and $i_{j+1}>i_j$, then $a_{i_{j+1}}\leq j$ and $a_{i_j}\leq j+1$ since $a_{i_j}\leq j$. 
\end{itemize}
Hence, if $\ell_j=\pi(i_j)$ for every $j\in[m]$, then
\begin{align*}
&\ell_m\leq\ell_{m-1}\leq\dotsb\leq\ell_1\,;\\
&\forall\,j\in[m]\,,\quad a_{\ell_j}\leq j\,.
\end{align*}

For the converse, 
note that, by definition of $\bGk$, if $j\in\cN(p)$ for some $p>1$, $m\neq p$, and $m<j$, then also $m\in\cN(p)$.
Thus, if at the end of the execution $j\in\bnv$, $m<j$ and $a_m\leq a_j+1$, then also $m\in\bnv$.
\end{proof}

\section{Main Theorem}

\begin{theorem}\label{main}
The $\cG^k_n$-parking functions are exactly the $k$-partial parking functions. Their number is
$$(n+1)^{n-1}\,.$$
\end{theorem}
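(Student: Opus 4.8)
The plan is to read both defining conditions of Definition~\ref{def37} off the execution of the $\dfs$-Burning Algorithm on $\bGk$. By Proposition~\ref{propant}, $\ba\in[n]^n$ is a $\cG^k_n$-parking function precisely when this execution burns every vertex, i.e. when the final list $\bnv=\langle 0=i_0,i_1,\dots,i_m\rangle$ has $m=n$; so the whole equivalence reduces to showing that $m=n$ if and only if $\ba$ parks every element of $[k,n]$ and $1\in Z(\bak)$. The centre condition is the easy half: since $\pi$ fixes $[k-1]$ pointwise we have $\pi(1)=1$, and $1$ is the least possible vertex, so whenever some vertex $<k$ is burnt Lemma~\ref{centro} gives $Z(\bak)=\{\pi(i_1),\dots,\pi(i_p)\}$ with $i_p=\min\{i_1,\dots,i_m\}$, whence $1\in Z(\bak)$ iff $i_p=1$ iff $1\in\bnv$.

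For the forward implication I would obtain the two conditions separately. That $\ba$ parks $[k,n]$ needs only Definition~\ref{def-GPark} applied to well-chosen subsets: fix $i\in[k,n]$ and put $J_i=\{v\in[k,n]\mid a_v>i\}$. Every out-arc of a vertex $v\in[k,n]$ in $\cG^k_n$ runs to $1$ (with multiplicity $k-1$, from the $I$-hyperplanes), to some $w\in[v+1,n]$ (from $C$), or to some $w\in[k,v-1]$ (from $S$); hence the number of arcs from $v$ leaving $J_i$ equals $(k-1)+\big|\{w\in[k,n]\mid a_w\le i\}\big|$, a quantity independent of $v$. Definition~\ref{def-GPark} supplies a $v\in J_i$ with this number $\ge a_v-1\ge i$, which is exactly the inequality of Lemma~\ref{remrem}; the empty case $J_i=\varnothing$ is immediate. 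That $1\in Z(\bak)$ then follows from the paragraph above: if $\ba$ is a $\cG^k_n$-parking function, all vertices burn, so $1\in\bnv$ and $1\in Z(\bak)$.

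The converse --- that the two conditions force $m=n$ --- is where the real work lies, and I expect it to be the main obstacle. Assuming $\ba$ parks $[k,n]$ and $1\in Z(\bak)$, I would run the algorithm and rule out any unburnt vertex. The guiding idea is that $\bGk$ is engineered so that the $k-1$ parallel arcs $(1,v)$ into each large $v$ play the role of the $k-1$ slots that Lemma~\ref{remrem} prepends: once $1$ is burnt, the burnt large vertices form a set that is downward closed with slack $k-1$ (this is the closure property isolated in the proof of Lemma~\ref{centro}), so the parking inequality forces all of $[k,n]$ to burn; and the same high multiplicities, together with the arcs $(u,v)$ for every $u>v$, then force every vertex of $[2,k-1]$ to burn. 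The delicate points are that $1$ itself must be shown to burn from the hypothesis $1\in Z(\bak)$ (handling also the case where the minimal burnt vertex is $\ge k$, to which Lemma~\ref{centro} does not directly apply) and that the in-degree count at each small vertex $v$ exactly meets $a_v$ only when every relevant predecessor has burnt, so the argument must track the DFS order carefully; this mirrors, and specialises at $k=n$ to, the Ish-case argument of \cite{DGO3}. Once $m=n$ is established, Proposition~\ref{def2.ppf} translates the conclusion into the permutation form stated in the Introduction.

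Finally, for the cardinality I would use the bijection recorded with the two algorithms of Figure~\ref{algs}: $\ba\mapsto\tree$ maps the $\cG^k_n$-parking functions bijectively onto the spanning arborescences of $\bGk$ rooted at $0$ (edges directed away from $0$), with inverse given by the Tree to Parking Function Algorithm. By the Matrix--Tree Theorem their number is the determinant of the reduced Laplacian of $\bGk$, and a routine evaluation shows this determinant to be independent of $k$; its value is therefore that at $k=2$, where $\cG^2_n=K_n$ and the $\cG^2_n$-parking functions are the classical parking functions, numbering $(n+1)^{n-1}$. This agrees with the count of regions of $\cA^k_n$ recorded in the Introduction.
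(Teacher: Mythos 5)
Your overall architecture coincides with the paper's: by Proposition~\ref{propant} everything reduces to showing that the $\dfs$-Burning Algorithm applied to $\bGk$ burns every vertex if and only if $\ba$ parks $[k,n]$ and $1\in Z(\bak)$, which is precisely the three-way equivalence of the paper's Lemma~\ref{Ishlabels}, with Lemma~\ref{centro} mediating between ``$1$ is burnt'' and ``$1\in Z(\bak)$''. One genuinely different (and clean) ingredient of yours is the proof that a $\cG^k_n$-parking function parks $[k,n]$: applying Definition~\ref{def-GPark} to $J_i=\{v\in[k,n]\mid a_v>i\}$ and observing that every $v\in J_i$ has exactly $(k-1)+|\{w\in[k,n]\mid a_w\le i\}|$ out-arcs leaving $J_i$ gives Lemma~\ref{remrem} directly, whereas the paper obtains the same implication (\eqref{ic}$\implies$\eqref{ia}) by counting, inside the execution, the arcs of $\damp\cup\tree$ entering a well-chosen burnt vertex. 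Your Matrix--Tree route to the cardinality is also viable (the reduced Laplacian of $\bGk$ does have determinant $(n+1)^{n-1}$ for every $k$), but you assert rather than perform the $k$-independence of the determinant; the paper simply cites the region count from \cite{DGO3}.

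The genuine gap is the implication \eqref{ib}$\implies$\eqref{ic}, which you correctly identify as ``where the real work lies'' and then leave as a guiding idea with the ``delicate points'' flagged but unresolved. In particular you never show that the hypothesis $1\in Z(\bak)$ forces the algorithm to reach the vertex $1$: Lemma~\ref{centro} only yields the easy direction (if $1$ is burnt then $1\in Z(\bak)$), and your claimed equivalence ``$1\in Z(\bak)$ iff $1\in\bnv$'' has the hard direction as its content, so it cannot be invoked as a lemma. Nor do you carry out the in-degree count at a hypothetical unburnt vertex. The paper's argument is concrete: take the greatest unburnt $j$; if $j<k$, its counter is decremented once by $0$, once by each of the $n-j$ burnt vertices above it, and $j-1$ times by $1$, forcing $a_j>n$; if $j\ge k$, choose the unburnt $p\in[k,n]$ with $a_p=\alpha$ minimal among unburnt large vertices, use $|\{q\in[k,n]\mid a_q<\alpha\}|\ge\alpha-k$ (from Lemma~\ref{remrem}) together with $\{q\in[k,n]\mid a_q<\alpha\}\subseteq\bnv$ to count decrements and reach $\alpha-1-(\alpha-k)-(k-1)>0$, a contradiction. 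Some such count is indispensable, and your proposal does not contain it; likewise your use of Lemma~\ref{centro} must separately dispose of the case in which no burnt vertex lies below $k$, where that lemma says nothing about $Z(\bak)$. Until these steps are supplied the proof is a plan, not a proof.
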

We know that there are $(n+1)^{n-1}$ regions in the $\cA^k_n$ arrangement of hyperplanes, which are bijectively labelled by the  
$\cG^k_n$-parking functions \cite[Theorem 3.7]{DGO3}.

Hence, all we have to prove is the first sentence. This is an immediate consequence of the following  Lemma~\ref{Ishlabels} and of the fact that the $\cG$-parking functions are those functions for which the $\dfs$-Burning Algorithm burns all vertices during the whole execution.

\begin{lemma}\label{Ishlabels}
Let $\ba\in[n]^n$ be the input of the $\dfs$-Burning Algorithm
applied to $\bGk$ ($2\leq k\leq n$) as defined above, and consider $\bnv=\langle 0\!\!=\!\!i_0,i_1,\dotsc,i_m\rangle$ 
at the end of the execution.
Then the following statements are equivalent:
\renewcommand{\theenumi}{\ref{Ishlabels}.\arabic{enumi}}
\renewcommand{\labelenumi}{\theenumi. }
\begin{enumerate}
\item\label{ia} $\ba$ parks every element of $[k,n]$ and $i_p=1$ for some $1\leq p\leq m$;
\item\label{ib}  $\ba$ is a $k$-partial parking function; 
\item\label{ic} as a set, $\bnv=\{0\}\cup[n]$ or, equivalently, $m=n$.
\end{enumerate}
\end{lemma}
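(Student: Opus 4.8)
The plan is to prove the three-way equivalence by establishing $\ref{ia}\Rightarrow\ref{ib}\Rightarrow\ref{ic}\Rightarrow\ref{ia}$, using Lemma~\ref{centro} as the bridge between the combinatorial structure of $\bnv$ and the centre $Z(\bak)$. The key observation is that Lemma~\ref{centro} already tells us exactly what $Z(\bak)$ is in terms of the burnt vertices: if $i_p=\min\{i_1,\dotsc,i_m\}<k$, then $Z(\bak)=\{\pi(i_1),\dotsc,\pi(i_p)\}$. This is precisely the link needed to connect the burning process (condition~\ref{ic}) with the defining properties of a $k$-partial parking function (condition~\ref{ib}, which by Definition~\ref{def37} asks that $\ba$ park all of $[k,n]$ and that $1\in Z(\bak)$).

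First I would handle $\ref{ia}\Rightarrow\ref{ib}$. Assuming $\ba$ parks every element of $[k,n]$ and that $i_p=1$ for some $p$, I want to conclude $1\in Z(\bak)$. Since $1<k$, Lemma~\ref{centro} applies with $\min\{i_1,\dotsc,i_m\}=1=i_p$, giving $Z(\bak)=\{\pi(i_1),\dotsc,\pi(i_p)\}$; because $i_p=1$ and $\pi$ fixes all indices below $k$, we get $\pi(1)=1\in Z(\bak)$, so condition~\ref{ib} holds. For $\ref{ib}\Rightarrow\ref{ic}$, I assume $\ba$ is a $k$-partial parking function and must show the algorithm burns every vertex, i.e. $m=n$. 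The strategy is to argue by contradiction: if some vertex fails to burn, then in particular the minimum burnt vertex $i_p$ satisfies the dichotomy of Lemma~\ref{centro} ($i_p=1$ or $p=m$). Using $1\in Z(\bak)$ together with the fact that $\ba$ parks all of $[k,n]$, I would show that the converse direction of Lemma~\ref{centro} (the final paragraph of its proof, giving a closure property of $\bnv$ under smaller indices) forces every vertex into $\bnv$. The parking condition on $[k,n]$, rephrased via Lemma~\ref{remrem} as the counting inequality $|\{j\in[k,n]\mid a_j\le i\}|+k-1\ge i$, is what guarantees there are always enough burnable neighbours to continue the fire through the high-index vertices.

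For the closing implication $\ref{ic}\Rightarrow\ref{ia}$, if $m=n$ then the fire burns everything, so in particular vertex $1$ is burnt, giving $i_p=1$ for some $p$; and since all of $[k,n]$ is burnt, Proposition~\ref{propant} (or directly the meaning of burning) shows $\ba$ parks every element of $[k,n]$, recovering condition~\ref{ia}. Throughout, the order conventions on $\cN$ fixed just before Figure~\ref{algs} matter, because Lemma~\ref{centro} relies on them; I would flag explicitly that the descending-order sorting is what makes the centre coincide with the set of small burnt vertices rather than merely containing it.

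The hard part will be the implication $\ref{ib}\Rightarrow\ref{ic}$, specifically translating ``$\ba$ parks all of $[k,n]$'' into a guarantee that the burning reaches every high-index vertex. The subtlety is that burning proceeds through the auxiliary graph $\bGk$ whose high-index arcs exist only for tails $i\ge k$ (the asymmetry noted in item~(3) of the $\cN$ conventions), so the symmetric ``plenty of neighbours'' argument used in the pure Ish or Shi cases does not transfer verbatim. I expect the cleanest route is to combine the closure property from the converse half of Lemma~\ref{centro} with the counting inequality of Lemma~\ref{remrem}, showing inductively (on decreasing index) that the failure of any $j\in[k,n]$ to burn would violate the parking inequality. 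Establishing that induction carefully, keeping track of how the dampening of arc multiplicities interacts with the $\min\{i,k\}-2$ multiplicity bound on arcs out of vertex $1$, is where the real work lies.
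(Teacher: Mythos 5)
There is a genuine gap, and it is concentrated in the implication \eqref{ic}$\implies$\eqref{ia}, which you dismiss as the ``closing'' step. You argue that if $m=n$ then all of $[k,n]$ is burnt and hence, ``by Proposition~\ref{propant} or directly the meaning of burning,'' $\ba$ parks every element of $[k,n]$. This conflates two different notions: a vertex being \emph{burnt} by the $\dfs$-Burning Algorithm on $\bGk$ is not the same as an index being \emph{parked} by the Konheim--Weiss Parking Algorithm (equivalently, the counting inequality of Lemma~\ref{remrem}). Proposition~\ref{propant} only relates burning to being a $\cG$-parking function; it says nothing about street parking. Deducing the parking inequality from the fact that every vertex burns is precisely one of the two substantive directions of the lemma, and the paper proves it by a counting argument: assuming $|\{q\in[k,n]\mid a_q>j\}|\geq n-j+1$ for some $j\in[k,n]$, it takes the first such vertex $p$ to be burnt and shows that the number of arcs into $p$ available at that moment is at most $(j-k)+1+(k-1)=j<a_p$, so $p$ could not have been burnt. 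Without some such argument your cycle of implications does not close.

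The implication \eqref{ib}$\implies$\eqref{ic} is also left as a plan rather than a proof: you correctly identify it as the hard part and propose an induction on decreasing index using the closure property from the converse half of Lemma~\ref{centro} together with Lemma~\ref{remrem}, but you do not carry it out. For comparison, the paper's route is not an induction but a direct double count of the decrements of $a_j$ in Line~14: for the greatest unburnt $j$, one decrement from $0$, one from each burnt $i>j$ (or from each burnt element of $[k,n]$ in the case $j\geq k$, where the set $A=\{q\in[k,n]\mid a_q<\alpha\}$ has size at least $\alpha-k$ by the parking hypothesis), and up to $k-1$ or $j-1$ from vertex $1$; summing these exceeds $a_j-1$, a contradiction. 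Your first implication \eqref{ia}$\implies$\eqref{ib} via Lemma~\ref{centro} matches the paper and is fine, but as it stands the proposal proves only one of the three arrows.
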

\renewcommand{\theenumi}{\arabic{enumi}}
\renewcommand{\labelenumi}{\theenumi. }

\begin{proof}$\ $\\[-18pt]

\medskip\noindent
\eqref{ia}$\implies$\eqref{ib}. \enspace
Since $\ba$ parks all the elements of $[k,n]$, it is sufficient to show that $1\in Z(\bak)$, which follows from Lemma~\ref{centro}.

\medskip\noindent
\eqref{ib}$\implies$\eqref{ic}. \enspace
Suppose that $1$ belongs to the  centre of $\bak$ but there is a greatest element $j\in[n]$ which is not in $\bnv$ at the end of the execution. Suppose first that $j<k$. Then, during the execution of the algorithm (more precisely, during the execution of Line~14)
the value of $a_j$ has decreased once for $i=0$ (that is, as a neighbour of $0$), once for each value of $i>j$ (in a total of $n-j$), since $i\in\bnv$ by definition of $j$, and $j-1$ times for $i=1$, and is still greater than zero. Hence $a_j>n$, 
which is absurd.

Now, suppose that $j\geq k$, and let 
\begin{align*}
&\alpha=\min\big\{a_i\mid i\notin\bnv\cap[k,n]\big\}\,\\
&p=\min\big\{q\in[k,n]\mid a_q=\alpha\big\}\quad\text{and}\\
&A=\big\{q\in[k,n]\mid a_q<\alpha\big\}\,,
\shortintertext{so that}
&\begin{cases}|A|\geq \alpha-k\quad\text{(since $\ba$ parks all the elements of $[k,n]$);}\\A\subseteq\bnv.\end{cases}
\end{align*}
Again, during the execution of Line~14
the value of $a_p=\alpha$ has decreased once for $i=0$, once for each value of $i\neq p$ in $\bnv\cap[k,n]\supseteq A$, and $k-1$ times for $i=1$, and is still greater than zero. This means that $\alpha-1-(\alpha-k)-(k-1)>0$, which is not possible.

\medskip\noindent
\eqref{ic}$\implies$\eqref{ia}. \enspace
Contrary to our hypothesis, we admit that all the elements of $[n]$ belong to $\bnv$
at the end of the execution, but that for some $j\in[k,n]$
\begin{align*}&A_j=\big\{q\in[k,n]\mid a_q>j\big\}
\shortintertext{verifies}
&|A_j|\geq n-j+1\,.
\end{align*}
Remember that $\bnv=\langle 0\!\!=\!\!i_0,i_1,\dotsc,i_n\rangle$ is the ordered list of burnt vertices at the end of the execution and let
$$r=\min\big\{q\in[k,n]\mid i_q\in A_j\big\} \text{ and } p=i_r\,.$$
Then  $\alpha=a_p>j$. Since $p\in\bnv$, the number of elements of form $(i,p)$ of the set $\damp\cup\tree$  (which is equal to $\alpha$)  must be greater than $j$. But when $p$ was burned, at most $(n-k+1)-(n-j+1)=j-k$ elements of $[k,n]$ different from $p$ were already burned, and even if
$0$ and $1$ were also burned, the number of edges could not be greater than $(j-k)+1+(k-1)=j$, a  contradiction.\qedhere
\end{proof}

\noindent\emph{Acknowledgements.}\enspace
This work was partially supported by CMUP (UID/MAT/00144/ 2013)
  and CIDMA (UID/MAT/04106/2013), which are funded by FCT (Portugal)
  with national (ME) and European structural funds through the
  programs FEDER, under the partnership agreement PT2020.

\end{document}